\newtheorem{theorem}{Theorem}
\newtheorem{remark}[theorem]{Remark}
\newtheorem{lemma}[theorem]{Lemma}
\newtheorem{proposition}[theorem]{Proposition}
\DeclareMathOperator*{\divergenz}{div}              %
\DeclareMathOperator*{\ints}{int}         %
\DeclareMathOperator*{\essinf}{ess ~inf}         %
\DeclareMathOperator*{\ww}{w}         %
\DeclareMathOperator*{\Ss}{S}         %
\newcommand{\N}{\mathbb{N}}
\newcommand{\R}{\mathbb{R}}
\newcommand{\Lp}[1]{L^{#1}(\Omega)}
\newcommand{\Lpvalued}[1]{L^{#1}\left(\Omega;\R^N\right)}
\newcommand{\Wpzero}[1]{W^{1,#1}_0(\Omega)}
\newcommand{\eps}{\varepsilon}
\newcommand{\ph}{\varphi}
\newcommand{\into}{\int_{\Omega}}
\newcommand{\weak}{\overset{\ww}{\to}}
\newcommand{\Linf}{L^{\infty}(\Omega)}
\newcommand{\close}{\overline{\Omega}}
\newcommand{\interior}{\ints \left(C^1_0(\overline{\Omega})_+\right)}
\newcommand{\cprime}{$'$}
\renewcommand{\l}{\left}
\renewcommand{\r}{\right}
\numberwithin{theorem}{section}
\numberwithin{equation}{section}
\title[Positive solutions for weighted singular $p$-Laplace equations]{Positive solutions for weighted singular $p$-Laplace equations via Nehari manifolds}
\author[N.\,S.\,Papageorgiou]{Nikolaos S.\,Papageorgiou}
\address[N.\,S.\,Papageorgiou]{National Technical University, Department of Mathematics, Zografou Campus, Athens 15780, Greece}
\email{npapg@math.ntua.gr}
\author[P.\,Winkert]{Patrick Winkert}
\address[P.\,Winkert]{Technische Universit\"{a}t Berlin, Institut f\"{u}r Mathematik, Stra\ss e des 17.\,Juni 136, 10623 Berlin, Germany}
\email{winkert@math.tu-berlin.de}
\subjclass[2010]{35J20, 35J67, 35J75, 35R01}
\keywords{Weighted $p$-Laplacian, singular problems, Nehari manifold, positive solutions}
\begin{document}

\begin{abstract}
      In this paper we study weighted singular $p$-Laplace equations involving a bounded weight function which can be discontinuous. Due to its discontinuity classical regularity results cannot be applied. Based on Nehari manifolds we prove the existence of at least two positive bounded solutions of such problems.
\end{abstract}

\maketitle

\section{Introduction}

Let $\Omega \subseteq \R^N$, $N \geq 1$, be a bounded domain with a Lipschitz boundary $\partial \Omega$. In this paper, we study the following nonlinear singular Dirichlet problem
\begin{align}\tag{P$_\lambda$}\label{problem}
  \begin{split}
    &-\divergenz(\xi(x)|\nabla u|^{p-2}\nabla u)
    =a(x)u^{-\gamma} +\lambda u^{r-1}\quad \text{in } \Omega \\
    &u\big|_{\partial \Omega}=0, \quad 0<\gamma<1, \quad 1<p<r<p^*, \quad u \geq 0, \quad \lambda>0.
   \end{split}
\end{align}
In this problem the differential operator is a weighted $p$-Laplacian with a weight $\xi\in \Linf$, $\xi\geq 0$ and $\xi$ is supposed to be bounded away from zero. Since $\xi$ is discontinuous in general, we cannot use the nonlinear global regularity theory of Lieberman \cite{Lieberman-1991} and the nonlinear strong maximum principle, see Pucci-Serrin \cite[pp.\,111 and 120]{Pucci-Serrin-2007}. The fact that these two basic tools are no longer available leads to a different approach in the analysis of problem \eqref{problem} which is based on the Nehari method. On the right-hand side of \eqref{problem} we have the competing effects of two different nonlinearities. One is the singular term $s\to a(x) s^{-\gamma}$ with $s > 0$ and the other one is a parametric $(p-1)$-superlinear perturbation $s \to \lambda s^{r-1}$ with $s\geq 0$ and $p<r<p^*$ with $p^*$ being the critical Sobolev exponent corresponding to $p$ defined by
\begin{align*}
    p^*=
    \begin{cases}
	\frac{Np}{N-p} &\text{if }p <N,\\
	+\infty & \text{if } N \leq p.
    \end{cases}
\end{align*}

We are looking for positive solutions of problem \eqref{problem} and we show that problem \eqref{problem} has at least two positive solutions for all $\lambda \geq 0$.

Singular problems with such competition phenomena were investigated by Sun-Wu-Long \cite{Sun-Wu-Long-2001} and Haitao \cite{Haitao-2003} for semilinear equations driven by the Laplacian and by Giacomoni-Schindler-Tak\'{a}\v{c} \cite{Giacomoni-Schindler-Takac-2007}, Papageorgiou-Smyrlis \cite{Papageorgiou-Smyrlis-2015}, Papageorgiou-Winkert \cite{Papageorgiou-Winkert-2019} and Perera-Zhang \cite{Perera-Zhang-2005} for equations driven by the $p$-Laplacian. We also refer to the works of Leonardi-Papageorgiou \cite{Leonardi-Papageorgiou-2019a}, \cite{Leonardi-Papageorgiou-2019b}. In all the mentioned works the weight function $\xi$ is equal to one and so we can use the global elliptic regularity theory and the strong maximum principle. These tools are crucial in the proofs of the works above and are combined with variational methods and suitable truncation and comparison techniques. The regularity theory guarantees that the solutions are in $C^1_0(\close)$ and then the strong maximum principle, so-called Hopf theorem, implies that these solutions are in $\interior$ which is the interior of the positive order cone of $C^1_0(\close)$. 

Without these facts the proofs of the works above are no more valid. As we already indicated, in our setting, these results do not hold, so we need to employ a different approach.

\section{Preliminaries} 

We denote by $\Wpzero{p}$ the usual Sobolev space with norm $\|\cdot\|$. By the Poincar\'{e} inequality we have
\begin{align*}
    \|u\|=\|\nabla u\|_p\quad\text{for all }u \in \Wpzero{p},
\end{align*}
where $\|\cdot\|_p$ denotes the norm of $\Lp{p}$ and $\Lpvalued{p}$, respectively. The norm of $\R^N$ is denoted by $|\cdot|$ and ``$\cdot$'' stands for the inner product in $\R^N$. 
By $p^*>1$ we denote the Sobolev critical exponent for $p$ defined by
\begin{align*}
    p^*=
    \begin{cases}
    \frac{Np}{N-p} & \text{if }p<N,\\
    +\infty & \text{if } N \leq p.
    \end{cases}
\end{align*}

Let $\xi\in\Linf$ with $0<\essinf_{\Omega}\xi$ and let $A\colon\Wpzero{p}\to W^{-1.p'}(\Omega)=\Wpzero{p}^*$ with $\frac{1}{p}+\frac{1}{p'}=1$ be defined by
\begin{align}\label{p-Laplace}
    \langle A(u), \ph\rangle=\into \xi(x)|\nabla u|^{p-2}\nabla u \cdot \nabla \ph \,dx \quad\text{for all }u,\ph\in\Wpzero{p}.
\end{align}
The next proposition states the main properties of this map and it can be found in Gasi{\'n}ski-Papageorgiou \cite[Problem 2.192, p.\,279]{Gasinski-Papageorgiou-2016}.

\begin{proposition}
    The map $A:\Wpzero{p}\to W^{-1,p'}(\Omega)$ defined in \eqref{p-Laplace} is bounded, that is, it maps bounded sets to bounded sets, continuous, strictly monotone, hence maximal monotone and it is of type $(\Ss)_+$, that is,
    \begin{align*}
	u_n \weak u \text{ in }\Wpzero{p}\quad\text{and}\quad \limsup_{n\to\infty} \langle A(u_n),u_n-u\rangle \leq 0,
    \end{align*}
    imply $u_n\to u$ in $\Wpzero{p}$.
\end{proposition}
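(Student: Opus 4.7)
The plan is to mimic the standard proofs for the unweighted $p$-Laplacian, exploiting that $\xi$ is bounded both from above (by $\|\xi\|_\infty$) and from below (by $c_0 := \essinf_\Omega \xi > 0$). All four properties then reduce to pointwise vector inequalities in $\R^N$ combined with elementary integration/convergence arguments.

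For boundedness and continuity, I would start with Hölder's inequality: for $u, \varphi \in W^{1,p}_0(\Omega)$,
\begin{align*}
|\langle A(u), \varphi\rangle| \leq \|\xi\|_\infty \int_\Omega |\nabla u|^{p-1}|\nabla \varphi|\,dx \leq \|\xi\|_\infty \|u\|^{p-1}\|\varphi\|,
\end{align*}
which gives $\|A(u)\|_* \leq \|\xi\|_\infty \|u\|^{p-1}$ and hence boundedness. For continuity, if $u_n \to u$ in $W^{1,p}_0(\Omega)$, then $\nabla u_n \to \nabla u$ in $L^p(\Omega;\R^N)$, so passing to a subsequence we have pointwise a.e.\ convergence together with an $L^p$-dominant. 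The continuity of $a \mapsto |a|^{p-2}a$ on $\R^N$ and the dominated convergence theorem yield $|\nabla u_n|^{p-2}\nabla u_n \to |\nabla u|^{p-2}\nabla u$ in $L^{p'}(\Omega;\R^N)$. Multiplying by $\xi \in L^\infty(\Omega)$ preserves this convergence, and the continuity of $A$ follows by the usual sub-subsequence argument.

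For strict monotonicity, I would invoke the classical vector inequality (see, e.g., Lindqvist): there exists $c_p > 0$ such that, for all $a, b \in \R^N$,
\begin{align*}
\bigl(|a|^{p-2}a - |b|^{p-2}b\bigr) \cdot (a-b) \geq
\begin{cases}
c_p\,|a-b|^p & \text{if } p \geq 2,\\
c_p\,\dfrac{|a-b|^2}{(|a|+|b|)^{2-p}} & \text{if } 1 < p < 2,
\end{cases}
\end{align*}
with the right-hand side strictly positive whenever $a \neq b$. Multiplying by $\xi(x) \geq c_0 > 0$ and integrating gives $\langle A(u) - A(v), u - v\rangle > 0$ whenever $u \neq v$, hence strict monotonicity. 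Since $A$ is everywhere defined, bounded and continuous (hence demicontinuous), monotonicity upgrades automatically to maximal monotonicity by the standard result for demicontinuous monotone operators on reflexive Banach spaces.

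The main work is the $(S)_+$ property. Assume $u_n \weak u$ in $W^{1,p}_0(\Omega)$ and $\lims \langle A(u_n), u_n - u\rangle \leq 0$. Weak convergence makes $\langle A(u), u_n - u\rangle \to 0$, so
\begin{align*}
\lims \langle A(u_n) - A(u), u_n - u\rangle \leq 0,
\end{align*}
and by monotonicity this limsup is actually a limit equal to $0$. Using the vector inequality above together with $\xi \geq c_0$, this forces $\nabla u_n \to \nabla u$ in $L^p(\Omega;\R^N)$: in the case $p \geq 2$ this is immediate, while for $1 < p < 2$ I would apply Hölder with exponents $2/p$ and $2/(2-p)$ to write
\begin{align*}
\int_\Omega |\nabla u_n - \nabla u|^p\,dx \leq \left(\int_\Omega \frac{|\nabla u_n - \nabla u|^2}{(|\nabla u_n|+|\nabla u|)^{2-p}}\,dx\right)^{p/2}\!\!\left(\int_\Omega (|\nabla u_n|+|\nabla u|)^p\,dx\right)^{(2-p)/2},
\end{align*}
where the first factor tends to zero by the integrated strict monotonicity and the second stays bounded since $\{u_n\}$ is bounded in $W^{1,p}_0(\Omega)$. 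By the Poincaré equivalence of norms, $u_n \to u$ in $W^{1,p}_0(\Omega)$. The anticipated obstacle is precisely the $1 < p < 2$ case of this last step, since the pointwise monotonicity bound degenerates and one must carefully combine it with the uniform $W^{1,p}$ bound via Hölder.
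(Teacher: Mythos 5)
Your proposal is correct: the paper does not prove this proposition at all but simply cites Gasi\'nski--Papageorgiou (Problem 2.192), and your argument is precisely the standard one that reference encapsulates -- H\"older for boundedness, dominated convergence plus a sub-subsequence argument for continuity, the Simon/Lindqvist vector inequalities (with $\xi\geq \xi_0>0$) for strict monotonicity and the $(\Ss)_+$ property, and the demicontinuity-plus-monotonicity criterion for maximality. The only point worth noting is the degenerate set where $|\nabla u_n|+|\nabla u|=0$ in the case $1<p<2$, which is harmless since the integrand on the left of your H\"older step vanishes there as well.
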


\section{Positive Solutions}

We suppose the following hypotheses related to problem \eqref{problem} throughout this paper.
\begin{enumerate}[leftmargin=1.2cm]
    \item[H$_0$:]
	$\xi, a\in \Linf$, \ \ $0<\xi_0 \leq \essinf_{\Omega}\xi$,\ \ $a(x) > 0$ for a.\,a.\,$x\in\Omega$.
\end{enumerate}

This hypothesis implies that the natural function space for the analysis of problem \eqref{problem} is the Sobolev space $\Wpzero{p}$.

Let $\ph_\lambda\colon \Wpzero{p}\to \R$ be the energy functional for problem \eqref{problem} defined by
\begin{align*}
    \ph_\lambda(u) = \frac{1}{p} \into \xi(x) |\nabla u|^p \,dx - \frac{1}{1-\gamma} \into a(x) |u|^{1-\gamma}\,dx-\frac{\lambda}{r} \|u\|_r^r.
\end{align*}
It is clear that $\ph_\lambda$ is not $C^1$. The corresponding Nehari manifold for this functional is given by
\begin{align*}
    N_\lambda=\l\{u\in\Wpzero{p}\colon \into \xi(x) |\nabla u|^p\,dx=\into a(x) |u|^{1-\gamma}\,dx+\lambda \|u\|_r^r,\ u\neq 0 \r\}.
\end{align*}
We decompose $N_\lambda$ into three disjoint parts
\begin{align*}
    N_\lambda^+ & = \l\{ u \in N_\lambda\colon (p+\gamma-1)\into \xi(x) |\nabla u|^p\,dx-\lambda (r+\gamma-1) \|u\|_r^r>0\r\},\\
    N_\lambda^0 & = \l\{ u \in N_\lambda\colon (p+\gamma-1)\into \xi(x) |\nabla u|^p\,dx=\lambda (r+\gamma-1) \|u\|_r^r\r\},\\
    N_\lambda^- & = \l\{ u \in N_\lambda\colon (p+\gamma-1)\into \xi(x) |\nabla u|^p\,dx-\lambda (r+\gamma-1) \|u\|_r^r<0\r\}.
\end{align*}
Note that $N_\lambda$ is much smaller than $\Wpzero{p}$ and contains the nontrivial weak solutions of \eqref{problem}. It is possible for $\ph_\lambda \big|_{N_\lambda}$ to exhibit properties which fail globally. One such property is identified in the next proposition.

\begin{proposition}\label{proposition_1}
    If hypotheses H$_0$ hold, then $\ph_\lambda\big|_{N_\lambda}$ is coercive.
\end{proposition}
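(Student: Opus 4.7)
The plan is to exploit the Nehari constraint to eliminate the parametric superlinear term $\lambda\|u\|_r^r$ from $\ph_\lambda$, leaving only a leading $p$-order coercive term and a sublinear lower-order term of order $1-\gamma<1$.

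Concretely, for $u\in N_\lambda$ the identity
\[
\lambda\|u\|_r^r=\into \xi(x)|\nabla u|^p\,dx-\into a(x)|u|^{1-\gamma}\,dx
\]
holds. Substituting this into $\ph_\lambda(u)$ and collecting terms, I would obtain
\[
\ph_\lambda(u)=\left(\frac{1}{p}-\frac{1}{r}\right)\into \xi(x)|\nabla u|^p\,dx-\left(\frac{1}{1-\gamma}-\frac{1}{r}\right)\into a(x)|u|^{1-\gamma}\,dx
\]
for every $u\in N_\lambda$. The first coefficient is strictly positive because $p<r$, and the second coefficient is strictly positive because $1-\gamma<1<r$; this is the key algebraic observation that makes the Nehari restriction work where the full functional is not coercive.

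Next I would estimate the two integrals. Using hypothesis H$_0$ (i.e. $\essinf_\Omega \xi\geq \xi_0>0$) together with the Poincar\'e equality $\|u\|=\|\nabla u\|_p$, I get
\[
\into \xi(x)|\nabla u|^p\,dx\geq \xi_0\|u\|^p.
\]
For the singular integral, since $\|a\|_\infty<\infty$ and $1-\gamma<1<p$, H\"older's inequality and the continuous embedding $\Wpzero{p}\hookrightarrow \Lp{p}$ yield a constant $C>0$, depending only on $\|a\|_\infty$, $|\Omega|$ and $\gamma$, with
\[
\into a(x)|u|^{1-\gamma}\,dx\leq C\|u\|^{1-\gamma}.
\]

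Combining these two bounds, I arrive at
\[
\ph_\lambda(u)\geq \left(\frac{1}{p}-\frac{1}{r}\right)\xi_0\|u\|^p-\left(\frac{1}{1-\gamma}-\frac{1}{r}\right)C\|u\|^{1-\gamma}\quad\text{for all }u\in N_\lambda.
\]
Since $p>1>1-\gamma$, the right-hand side tends to $+\infty$ as $\|u\|\to\infty$, which establishes coercivity. I do not expect a genuine obstacle here: the only subtlety is the choice of which term in the Nehari identity to eliminate, as substituting for $\int a(x)|u|^{1-\gamma}\,dx$ or $\int \xi(x)|\nabla u|^p\,dx$ would produce a coefficient of the wrong sign on the leading $p$-term and destroy coercivity; eliminating $\lambda\|u\|_r^r$ is the correct choice.
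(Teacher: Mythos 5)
Your proposal is correct and follows essentially the same route as the paper: both use the Nehari identity to eliminate $\lambda\|u\|_r^r$, arrive at $\ph_\lambda(u)=\l(\frac{1}{p}-\frac{1}{r}\r)\into\xi(x)|\nabla u|^p\,dx-\l(\frac{1}{1-\gamma}-\frac{1}{r}\r)\into a(x)|u|^{1-\gamma}\,dx$, and then bound below by $c_1\|u\|^p-c_2\|u\|^{1-\gamma}$ via H$_0$, H\"older's inequality and the Sobolev embedding. No gaps.
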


\begin{proof}
    Let $u \in N_\lambda$. From the definition of the Nehari manifold we have
    \begin{align}\label{1}
	-\frac{1}{r} \into \xi(x) |\nabla u|^p \,dx +\frac{1}{r} \into a(x) |u|^{1-\gamma} \,dx=-\frac{\lambda}{r} \|u\|^r_r.
    \end{align}
    From \eqref{1} and hypotheses H$_0$ we obtain
    \begin{align}\label{2}
      \begin{split}
	\ph_\lambda(u)
	& = \left[\frac{1}{p}-\frac{1}{r}\right] \into \xi(x) |\nabla u|^p \,dx- \left[\frac{1}{1-\gamma}-\frac{1}{r}\right] \into a(x)|u|^{1-\gamma} \,dx\\
	&\geq \l[\frac{1}{p}-\frac{1}{r}\r] \xi_0 \|u\|^p- \left[\frac{1}{1-\gamma}-\frac{1}{r}\right] \into a(x)|u|^{1-\gamma} \,dx\\
	&\geq c_1 \|u\|^p-c_2\|u\|^{1-\gamma}
      \end{split}
    \end{align}
    for some $c_1,c_2 >0$, where we have used Theorem 13.17 of Hewitt-Stromberg \cite[p.\,196]{Hewitt-Stromberg-1965}, the fact that $1-\gamma<1<p$ and the Sobolev embedding theorem. From \eqref{2} it is clear that $\ph_\lambda\big|_{N_\lambda}$ is coercive.
\end{proof}

Let $m^+_\lambda=\inf_{N_\lambda^+}\ph_\lambda$.

\begin{proposition}
    If hypotheses H$_0$ hold, then $m^+_\lambda<0$.
\end{proposition}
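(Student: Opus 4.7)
The plan is to produce an explicit element of $N_\lambda^+$ on which $\ph_\lambda$ is negative, via the standard fibering technique. For any nonnegative $u \in \Wpzero{p} \setminus \{0\}$, set $\psi(t) := \ph_\lambda(tu)$ for $t \geq 0$. Hypothesis H$_0$ together with $1-\gamma < p < r$ give at once that $\psi(0) = 0$, that $\psi(t) < 0$ for small $t > 0$ (the singular contribution $-\frac{t^{1-\gamma}}{1-\gamma}\into a(x)|u|^{1-\gamma}\,dx$ dominates), and that $\psi(t) \to -\infty$ as $t \to \infty$ (the term $-\frac{\lambda t^r}{r}\|u\|_r^r$ dominates, since $r > p$). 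A direct computation gives $\psi'(t) = t^{-\gamma}\bigl[h(t) - \into a(x)|u|^{1-\gamma}\,dx\bigr]$ where $h(t) := t^{p+\gamma-1}\into \xi(x)|\nabla u|^p\,dx - \lambda t^{r+\gamma-1}\|u\|_r^r$, and dividing the defining inequalities of $N_\lambda$ and $N_\lambda^+$ by appropriate powers of $t$ shows that $tu \in N_\lambda$ iff $h(t) = \into a(x)|u|^{1-\gamma}\,dx$ and $tu \in N_\lambda^+$ iff additionally $h'(t) > 0$.

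Because $r > p$, the function $h$ starts at $h(0)=0$, strictly increases to a unique maximum $h(t^*)$ at some $t^* > 0$, and decreases to $-\infty$. The crucial step is to select $u$ so that $h(t^*) > \into a(x)|u|^{1-\gamma}\,dx$; this is arranged by choosing $u$ with $\|\nabla u\|_p^p$ large relative to $\|u\|_r^r$ (e.g.\ a sum of many narrow disjoint bumps), since then $h(t^*)$ becomes arbitrarily large while $\into a|u|^{1-\gamma}\,dx$ remains controlled through H\"older's inequality and the $L^\infty$-bound on $a$. Under this condition, the equation $h(t) = \into a(x)|u|^{1-\gamma}\,dx$ has a smallest positive root $t_1 \in (0, t^*)$, and so $t_1 u \in N_\lambda^+$. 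On $(0, t_1)$ one has $h(t) < h(t_1)$, hence $\psi'(t) < 0$, so $\psi$ strictly decreases on $[0, t_1]$. Consequently $\ph_\lambda(t_1 u) = \psi(t_1) < \psi(0) = 0$, yielding $m_\lambda^+ \leq \ph_\lambda(t_1 u) < 0$.

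The principal obstacle is ensuring that $N_\lambda^+$ is nonempty, i.e.\ establishing the inequality $h(t^*) > \into a|u|^{1-\gamma}\,dx$. Observe that this inequality is invariant under the rescaling $u \mapsto \alpha u$ (both sides scale as $\alpha^{1-\gamma}$), so the difficulty is one of choosing the correct \emph{direction} in $\Wpzero{p}$ rather than the correct amplitude; once such a $u$ is fixed the rest of the proof is a straightforward analysis of the one-variable fibering map $\psi$, whose shape (negative near zero, going to $-\infty$ at infinity, with a local minimum corresponding to $N_\lambda^+$) is dictated by the inequalities $1-\gamma < p < r$.
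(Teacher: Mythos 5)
Your argument is correct, but it is not the route the paper takes. The paper's proof is a two-line algebraic computation valid for \emph{every} $u\in N_\lambda^+$: using the Nehari constraint to eliminate the singular term, one writes $\ph_\lambda(u)=\bigl[\tfrac1p-\tfrac{1}{1-\gamma}\bigr]\into\xi(x)|\nabla u|^p\,dx-\lambda\bigl[\tfrac1r-\tfrac{1}{1-\gamma}\bigr]\|u\|_r^r$, and then the defining inequality of $N_\lambda^+$, namely $\lambda\|u\|_r^r<\tfrac{p+\gamma-1}{r+\gamma-1}\into\xi(x)|\nabla u|^p\,dx$, forces this to be bounded above by $\tfrac{p+\gamma-1}{1-\gamma}\bigl[\tfrac1r-\tfrac1p\bigr]\into\xi(x)|\nabla u|^p\,dx<0$. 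So the paper gets the stronger statement $\ph_\lambda\big|_{N_\lambda^+}<0$ pointwise, with no construction needed --- but it tacitly assumes $N_\lambda^+\neq\emptyset$ (otherwise $m_\lambda^+=\inf\emptyset=+\infty$); nonemptiness only surfaces later, inside the proof of Proposition \ref{proposition_4}, where the fibering analysis produces $t_1u^*\in N_\lambda^+$. Your proof goes the other way: you exhibit one explicit element of $N_\lambda^+$ with negative energy via the fibering map $\psi$ and the auxiliary function $h$, which has the merit of settling nonemptiness at the same time. Your identification of $N_\lambda$ with $h(t)=\into a(x)|u|^{1-\gamma}\,dx$ and of $N_\lambda^+$ with the additional condition $h'(t)>0$ is exactly the mechanism the paper uses later (there written as $\eta_{u}(t)=\lambda\|u\|_r^r$ with $\eta_u'(t_1)>0$, the roles of the singular and superlinear terms being swapped), and your scale-invariance observation is sound: under the normalization $\|u\|=1$ the condition $h(t^*)>\into a(x)|u|^{1-\gamma}\,dx$ reduces to making $\|u\|_r$ small, which concentrating bumps achieve. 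The only loose point is the phrase ``$h(t^*)$ becomes arbitrarily large'': without normalizing, both sides of the inequality may tend to $0$ along a shrinking bump, and what actually matters (and is true) is that their ratio blows up. In short: correct, slightly heavier than the paper's proof, but it also supplies the nonemptiness of $N_\lambda^+$ that the paper leaves implicit.
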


\begin{proof}
    From the definition of $N^+_\lambda$, we have, for $u \in N_\lambda^+$,
    \begin{align}\label{3}
	\lambda \|u\|_r^r < \frac{p+\gamma-1}{r+\gamma-1} \into \xi(x) |\nabla u|^p\,dx.
    \end{align}
    Moreover, since $u \in N^+_\lambda \subseteq N_\lambda$, it holds
    \begin{align}\label{4}
	-\frac{1}{1-\gamma} \into a(x) |u|^{1-\gamma}\,dx= -\frac{1}{1-\gamma} \into \xi(x) |\nabla u|^p \,dx+ \frac{\lambda}{1-\gamma} \|u\|^r_r.
    \end{align}
    Applying \eqref{3}, \eqref{4}, hypotheses H$_0$ and recalling $0<\gamma<1<p<r$, we get for $u \in N^+_\lambda$
    \begin{align*}
	\ph_\lambda(u)
	& = \left[\frac{1}{p}-\frac{1}{1-\gamma}\r] \into \xi(x) |\nabla u|^p\,dx-\lambda \l[\frac{1}{r}-\frac{1}{1-\gamma}\r] \|u\|^r_r\\
	& < \left[\frac{-(p+\gamma-1)}{p(1-\gamma)} +\frac{r+\gamma-1}{r(1-\gamma)}\cdot \frac{p+\gamma-1}{r+\gamma-1}\right] \into\xi(x) |\nabla u|^p \,dx\\
	&= \frac{p+\gamma-1}{1-\gamma} \l[\frac{1}{r}-\frac{1}{p}\r]\into\xi(x)|\nabla u|^p\,dx\\
	&< 0.
    \end{align*}
    Therefore, $\ph_\lambda\big|_{N_\lambda^+}<0$ and so $m^+_\lambda<0$.
\end{proof}

\begin{proposition}\label{proposition_3}
    If hypotheses H$_0$ hold, then there exists $\lambda^*>0$ such that for all $\lambda \in (0,\lambda^*)$ we have $N^0_\lambda =\emptyset$.
\end{proposition}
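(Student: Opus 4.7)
The plan is to argue by contradiction: I would suppose $u \in N^0_\lambda$ for some $\lambda>0$ and extract from the two defining identities a $\lambda$-independent upper bound on $\|u\|$ together with a lower bound that blows up as $\lambda\to 0^+$; these are incompatible once $\lambda$ is small enough.

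\textbf{Step 1: Eliminate the term $\lambda\|u\|_r^r$.} If $u\in N^0_\lambda$, then from the defining equality of $N^0_\lambda$ I get
\begin{align*}
    \lambda\|u\|_r^r=\frac{p+\gamma-1}{r+\gamma-1}\into\xi(x)|\nabla u|^p\,dx.
\end{align*}
Substituting this into the Nehari identity $\into\xi(x)|\nabla u|^p\,dx=\into a(x)|u|^{1-\gamma}\,dx+\lambda\|u\|_r^r$ yields
\begin{align*}
    \frac{r-p}{r+\gamma-1}\into \xi(x)|\nabla u|^p\,dx=\into a(x)|u|^{1-\gamma}\,dx,
\end{align*}
which is free of $\lambda$.

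\textbf{Step 2: Upper bound on $\|u\|$ independent of $\lambda$.} Using $\xi\geq\xi_0>0$ on the left and $a\in\Linf$ together with the embedding $\Wpzero{p}\hookrightarrow L^{1-\gamma}(\Omega)$ (valid since $1-\gamma<1<p$) on the right, the previous identity gives $\xi_0\|u\|^p \leq c_3\|u\|^{1-\gamma}$ for some $c_3=c_3(\|a\|_\infty,r,p,\gamma,\Omega)>0$. Because $p+\gamma-1>0$, this produces a bound $\|u\|\leq M$ with $M>0$ depending only on the data, not on $\lambda$.

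\textbf{Step 3: Lower bound depending on $\lambda$.} The $N^0_\lambda$ identity combined with $\xi\geq\xi_0$ and the Sobolev embedding $\Wpzero{p}\hookrightarrow L^r(\Omega)$ (recall $r<p^*$) yields
\begin{align*}
    (p+\gamma-1)\xi_0\|u\|^p \leq \lambda(r+\gamma-1)c_4^r\|u\|^r,
\end{align*}
hence $\|u\|^{r-p}\geq C/\lambda$ for an explicit $C>0$ independent of $\lambda$ (using $r>p$).

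\textbf{Step 4: Conclude.} Combining Steps 2 and 3 gives $M^{r-p}\geq\|u\|^{r-p}\geq C/\lambda$, i.e., $\lambda\geq C/M^{r-p}=:\lambda^*>0$. Consequently, for every $\lambda\in(0,\lambda^*)$ no element $u$ can lie in $N^0_\lambda$, so $N^0_\lambda=\emptyset$. There is no real obstacle here beyond bookkeeping; the only thing to keep straight is that $1-\gamma<p<r<p^*$ is precisely what makes the two bounds go in opposite directions in $\|u\|$, so that the argument pinches off the set $N^0_\lambda$ for small $\lambda$.
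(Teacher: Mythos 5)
Your proposal is correct and follows essentially the same route as the paper: the same elimination of the $\lambda\|u\|_r^r$ term via the two Nehari identities to get a $\lambda$-independent upper bound on $\|u\|$, and the same Sobolev-embedding lower bound $\|u\|^{r-p}\geq C/\lambda$, the only difference being that you extract an explicit threshold $\lambda^*=C/M^{r-p}$ where the paper phrases it as a contradiction by letting $\lambda\to 0^+$. (Minor phrasing point: for the upper bound one should invoke H\"older's inequality to estimate $\into |u|^{1-\gamma}\,dx$ rather than speak of an ``embedding into $L^{1-\gamma}(\Omega)$'', since $1-\gamma<1$; this is exactly the Hewitt--Stromberg reference the paper uses and does not affect the argument.)
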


\begin{proof}	
    We argue indirectly. So, suppose that  for every $\lambda^*>0$ there exists $\lambda \in (0,\lambda^*)$ such that $N^0_\lambda \neq \emptyset$. Hence, given $\lambda>0$, we can find $u\in N_\lambda$ such that
    \begin{align}\label{5}
	(p+\gamma-1)\into \xi(x) |\nabla u|^p\,dx=\lambda (r+\gamma-1)\|u\|^r_r.
    \end{align}
    Moreover, since $u \in N_\lambda$, one has
    \begin{align}\label{6}
      \begin{split}
	& (r+\gamma-1)\into \xi(x) |\nabla u|^p\,dx -(r+\gamma-1) \into a(x) |u|^{1-\gamma}\,dx\\
	& =\lambda(r+\gamma-1) \|u\|^r_r.
      \end{split}
    \end{align}
    Subtracting \eqref{5} from \eqref{6} results in
    \begin{align*}
	(r-p)\into \xi(x) |\nabla u|^p \,dx = (r+\gamma-1)\into a(x) |u|^{1-\gamma}\,dx.
    \end{align*}
    Hence, by hypotheses H$_0$,
    \begin{align*}
	(r-p)\xi_0 \|u\|^p \leq (r+\gamma-1)c_3 \|u\|^{1-\gamma}
    \end{align*}
    for some $c_3>0$. This implies
    \begin{align}\label{7}
	\|u\|^{p+\gamma-1} \leq c_4
    \end{align}
    for some $c_4>0$.
    
    On the other hand, from \eqref{5}, hypotheses H$_0$ and the Sobolev embedding theorem, we obtain
    \begin{align*}
	\|u\|^p \leq \lambda c_5 \|u\|^r
    \end{align*}
    for some $c_5>0$ and thus,
    \begin{align*}
	\l[\frac{1}{\lambda c_5}\r]^{\frac{1}{r-p}} \leq \|u\|.
    \end{align*}
    We let $\lambda\to 0^+$ and see that $\|u\|\to \infty$, contradicting \eqref{7}. Therefore, we can find $\lambda^*>0$ such that $N^0_\lambda=\emptyset$ for all $\lambda \in (0,\lambda^*)$.
\end{proof}

\begin{proposition}\label{proposition_4}
    If hypotheses H$_0$ hold, then there exists $\hat{\lambda}^*\in (0,\lambda^*]$ such that for every $\lambda \in (0,\hat{\lambda}^*)$, there exists $u^*\in N^+_\lambda$ such that
    \begin{align*}
	\ph_\lambda(u^*)=m^+_\lambda =\inf_{N^+_\lambda} \ph_\lambda
    \end{align*}
    and $u^*(x) \geq 0$ for a.\,a.\,$x\in \Omega$.
\end{proposition}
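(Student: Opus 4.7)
The plan is to apply the direct method of the calculus of variations to $\ph_\lambda$ on $N^+_\lambda$, combining Proposition~\ref{proposition_1} for coercivity, compact Sobolev embeddings for weak compactness, and a fibering analysis of $t \mapsto \ph_\lambda(tu^*)$ to pin the weak limit down to $N^+_\lambda$. First I would pick a minimizing sequence $(u_n) \subseteq N^+_\lambda$ with $\ph_\lambda(u_n) \to m^+_\lambda$; since $\ph_\lambda$ and the sign conditions defining $N_\lambda$ and $N^+_\lambda$ are invariant under $u \mapsto |u|$, one may take $u_n \geq 0$. Proposition~\ref{proposition_1} yields boundedness in $\Wpzero{p}$, so along a subsequence $u_n \weak u^*$ in $\Wpzero{p}$, $u_n \to u^*$ in $L^s(\Omega)$ for every $s \in [1, p^*)$, and pointwise a.e., giving $u^* \geq 0$ a.e.

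Next I would exclude $u^* \equiv 0$. In that case, strong $L^{1-\gamma}$ and $L^r$ convergence together with the Nehari identity and $\xi_0 > 0$ force $u_n \to 0$ in $\Wpzero{p}$ and hence $\ph_\lambda(u_n) \to 0$, contradicting $m^+_\lambda < 0$.

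The main obstacle is the strong convergence $u_n \to u^*$ in $\Wpzero{p}$; once this is secured, passing to the limit in the Nehari identity gives $u^* \in N_\lambda$ with $\ph_\lambda(u^*) = m^+_\lambda$, and Proposition~\ref{proposition_3} ($N^0_\lambda = \emptyset$ for $\lambda < \lambda^*$) rules out degeneration, placing $u^* \in N^+_\lambda$. Suppose, toward a contradiction, that strong convergence fails. Then by weak lower semicontinuity of the gradient term and strong $L^s$-convergence,
\begin{align*}
    \into \xi(x)|\nabla u^*|^p\,dx < L := \lim_{n\to\infty}\into \xi(x)|\nabla u_n|^p\,dx = \into a(x)|u^*|^{1-\gamma}\,dx + \lambda\|u^*\|_r^r,
\end{align*}
the second equality coming from passing to the limit in the Nehari identity. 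I would then analyze the fibering $h(t) := \ph_\lambda(tu^*)$ for $t > 0$. Using $u^* \not\equiv 0$, $u^* \geq 0$, $\xi_0 > 0$ and $0 < \gamma < 1 < p < r < p^*$, one checks $h(0^+)=0$, $h'(t) \to -\infty$ as $t \to 0^+$, and $h(t) \to -\infty$ as $t \to \infty$. Provided $\hat\lambda^* \in (0,\lambda^*]$ is chosen small enough that the auxiliary function
\begin{align*}
    \psi_{u^*}(t) = t^{p+\gamma-1}\into\xi(x)|\nabla u^*|^p\,dx - \into a(x)|u^*|^{1-\gamma}\,dx - \lambda t^{r+\gamma-1}\|u^*\|_r^r
\end{align*}
attains a strictly positive maximum, $h$ admits exactly two critical points $0 < t_1 < t_2$ with $t_1 u^* \in N^+_\lambda$ a strict local minimum, $t_2 u^* \in N^-_\lambda$ a strict local maximum, and $h$ strictly decreasing on $(0, t_1)$. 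The displayed strict inequality reads $\psi_{u^*}(1) < 0$; combined with the fact that $u_n \in N^+_\lambda$ forces the analogous scaling parameter for $u_n$ to lie to the left of the maximum of $\psi_{u_n}$, it then rules out $1 > t_2$ and yields $1 < t_1$. Consequently
\begin{align*}
    \ph_\lambda(t_1 u^*) < \ph_\lambda(u^*) \leq \liminf_{n\to\infty} \ph_\lambda(u_n) = m^+_\lambda,
\end{align*}
contradicting $t_1 u^* \in N^+_\lambda$.

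This establishes $u_n \to u^*$ strongly in $\Wpzero{p}$; then $u^* \in N^+_\lambda$ realizes $m^+_\lambda$ and $u^* \geq 0$ a.e.\ from Step~1. The delicate point of the whole argument is the fibering analysis at the weak limit, specifically the verification that $\psi_{u^*}$ has a positive maximum and that the branch selection places $1$ below $t_1$; this is where a smaller threshold $\hat\lambda^*$ may be required.
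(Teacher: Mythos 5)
Your overall strategy is the same as the paper's: a minimizing sequence in $N^+_\lambda$, boundedness from Proposition \ref{proposition_1}, weak and strong $L^r$ convergence, exclusion of $u^*=0$, a fibering analysis producing two scaling parameters $t_1<t_2$ for small $\lambda$, a contradiction with the definition of $m^+_\lambda$ if strong convergence fails, and Proposition \ref{proposition_3} to upgrade $u^*\in N_\lambda$ with the non-strict inequality to $u^*\in N^+_\lambda$. All of these steps are sound and match the paper.

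The gap is in the branch selection, i.e.\ the assertion that $1<t_1$. From the strict drop of the gradient term you correctly obtain $\psi_{u^*}(1)<0$ in your notation, i.e.\ $h'(1)<0$, but this only places $1$ in $(0,t_1)\cup(t_2,\infty)$. To exclude $1>t_2$ you appeal to the fact that $u_n\in N^+_\lambda$ puts the scaling parameter $1$ to the left of the maximum of $\psi_{u_n}$, and you implicitly transfer this to the limit function. That membership reads $(p+\gamma-1)\into\xi(x)|\nabla u_n|^p\,dx>\lambda(r+\gamma-1)\|u_n\|_r^r$, and under your standing assumption that strong convergence fails, the left-hand side converges to $L$, which is \emph{strictly larger} than $(p+\gamma-1)\into\xi(x)|\nabla u^*|^p\,dx$. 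Hence you cannot deduce $(p+\gamma-1)\into\xi(x)|\nabla u^*|^p\,dx\geq\lambda(r+\gamma-1)\|u^*\|_r^r$, which is exactly the statement that $1$ lies to the left of the argmax of the limit fibering map; the inequality that survives the limit involves $L$ and says nothing about $u^*$ itself. The paper avoids this trap by arguing the other way around: it evaluates the fibering derivative of $u_n$ (not of $u^*$) at the point $t_1$ determined by $u^*$, gets $\liminf_n\psi'_{u_n}(t_1)>\psi'_{u^*}(t_1)=0$ from the assumed strict inequality, and then uses that $\psi'_{u_n}<0$ on $(0,1)$ with $\psi'_{u_n}(1)=0$ (a consequence of $u_n\in N^+_\lambda$, so $1$ is the first critical point of $\psi_{u_n}$) to conclude $t_1>1$. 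Your version can be repaired — for instance, the estimate in the proof of Proposition \ref{proposition_1} together with $\ph_\lambda(u_n)<0$ gives a $\lambda$-independent bound $\|u_n\|\leq M$, which forces the argmax of the limit fibering map to exceed $1$ once $\lambda$ is small enough — but as written the exclusion of the case $1>t_2$ does not follow from what you state, and without it the final contradiction $\ph_\lambda(t_1u^*)<m^+_\lambda$ is not available.
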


\begin{proof}
    Let $\{u_n\}_{n\geq 1} \subseteq N^+_\lambda$ be a minimizing sequence, that is,
    \begin{align}\label{8}
	\ph_\lambda(u_n) \searrow m^+_\lambda <0 \quad\text{as }n\to\infty.
    \end{align}
    Since $N^+_\lambda \subseteq N_\lambda$, from Proposition \ref{proposition_1}, we infer that
    \begin{align*}
	\{u_n\}_{n\geq 1} \subseteq \Wpzero{p} \text{ is bounded.}
    \end{align*}
    So, by passing to a suitable subsequence if necessary, we may assume that
    \begin{align}\label{9}
	u_n \weak u^* \quad \text{in } \Wpzero{p}\quad\text{and}\quad u_n\to u^* \quad\text{in } \Lp{r}.
    \end{align}
    From \eqref{8} and $u_n \weak u^*$ in $\Wpzero{p}$ we have
    \begin{align*}
            \ph_\lambda(u^*) \leq \liminf_{n \to \infty} \ph_\lambda(u_n)<0=\ph_\lambda(0).
    \end{align*}
    Hence, $u^*\neq 0$.
    
    We consider the fibering function $\psi_{u^*}\colon [0,\infty)\to \R$ defined by
    \begin{align*}
	\psi_{u^*}(t)=\ph_\lambda(tu^*) \quad\text{for all }t\geq 0.
    \end{align*}
    Moreover, let $\eta_{u^*}\colon (0,\infty)\to \R$ be the function defined by
    \begin{align*}
	\eta_{u^*}(t)=t^{p-r} \into \xi(x) |\nabla u^*|^p\,dx-t^{-\gamma -r+1}\into a(x) |u^*|^{1-\gamma}\,dx \quad\text{for all } t>0.
    \end{align*}
    Note that as $t\to 0^+$, then $\eta_{u^*}(t)\to -\infty$, since $r-p<r+\gamma-1$ and $a(x)>0$ for a.\,a.\,$x\in\Omega$, see H$_0$. Also, $\eta_{u^*}(t)\to 0$ as $t\to +\infty$  and $\eta_{u^*}(t)>0$ for
    \begin{align*}
	t> \left[ \frac{\displaystyle \into a(x) |u^*|^{1-\gamma}\,dx}{\displaystyle\into \xi(x)|\nabla u^*|^p \,dx}\right]^{\frac{1}{p+\gamma-1}}=\hat{t}>0.
    \end{align*}
    Therefore, we can find $t_0>\hat{t}$ such that
    \begin{align*}
	\eta_{u^*}(t_0)=\max_{t>0} \eta_{u^*}.
    \end{align*}
    This maximizer is unique and it is given by the solution of
    \begin{align*}
	\eta'_{u^*}(t)=0.
    \end{align*}
    Hence,
    \begin{align*}
	t_0=\l[ \frac{\displaystyle(r+\gamma-1)\into a(x) |u^*|^{1-\gamma}\,dx}{\displaystyle(r-p)\into \xi(x) |\nabla u^*|^p\,dx}\right]^{\frac{1}{p+\gamma-1}}.
    \end{align*}
    We see that
    \begin{align*}
	tu^*\in N_\lambda \quad\text{if and only if}\quad \eta_{u^*}(t)=\lambda \|u^*\|_r^r>0.
    \end{align*}
    Let $\hat{\lambda}^* \in (0,\lambda^*]$ such that
    \begin{align*}
	\eta_{u^*}(t_0)>\lambda \|u^*\|_r^r\quad\text{for all }\lambda \in (0,\hat{\lambda}^*].
    \end{align*}
    We can find $t_1<t_0<t_2$ such that
    \begin{align}\label{10}
	\eta_{u^*}(t_1)=\lambda \|u^*\|_r^r=\eta_{u^*}(t_2) \quad\text{and}\quad \eta'_{u^*}(t_2)<0<\eta'_{u^*}(t_1).   
    \end{align}
    In this proof we will only use $t_1$, we mention the existence of $t_2$ as above since it will be needed in the sequel when we will minimize over $N^-_\lambda$.
    
    Note that $\psi_{u^*}\in C^2(0,\infty)$. Therefore,
    \begin{align*}
	\psi_{u^*}'(t_1)=t_1^{p-1} \into \xi(x) |\nabla u^*|^p\,dx -t_1^{-\gamma} \into a(x) |u^*|^{1-\gamma}\,dx-\lambda t_1^{r-1} \|u^*\|^r_r,
    \end{align*}
    and
    \begin{align}\label{11}
      \begin{split}
	\psi^{''}_{u^*}(t_1)
	& =(p-1)t_1^{p-2} \into \xi(x) |\nabla u^*|^{p}\,dx +\gamma t_1^{-\gamma-1} \into a(x) |u^*|^{1-\gamma}\,dx\\
	& \quad -(r-1)\lambda t_1^{r-2} \|u^*\|_r^r.
      \end{split}
    \end{align}
    From \eqref{10} we have
    \begin{align*}
	t_1^{p-r} \into \xi(x) |\nabla u^*|^p\,dx -\lambda \|u^*\|^r_r=t_1^{-\gamma-r+1} \into a(x) |u^*|^{1-\gamma}\,dx,
    \end{align*}
    which implies that
    \begin{align}\label{12}
	t_1^{p-2} \into \xi(x) |\nabla u^*|^{p}\,dx-\lambda t_1^{r-2} \|u^*\|_r^r=t_1^{-\gamma-1} \into a(x) |u^*|^{1-\gamma}\,dx.
    \end{align}
    We will now apply \eqref{12} in \eqref{11} and obtain
    \begin{align}\label{13}
      \begin{split}
	\psi_{u^*}^{''}(t_1)
	& = [p+\gamma-1] t_1^{p-2} \into \xi(x) |\nabla u^*|^p\,dx -(r+\gamma-1)\lambda t_1^{r-2} \|u^*\|_r^r\\
	& = t_1^{-2} \l[(p+\gamma-1)t_1^p \into \xi(x) |\nabla u^*|^p \,dx - (r+\gamma-1)\lambda t_1^r \|u^*\|_r^r \r].
      \end{split}
    \end{align}
    But using \eqref{12} in \eqref{11} gives
    \begin{align}\label{14}
      \begin{split}
	&\psi^{''}_{u^*}(t_1)\\
	& = (p-1)t_1^{p-2} \into \xi(x) |\nabla u^*|^p\,dx+\gamma t_1^{-\gamma-1}\into a(x) |u^*|^{1-\gamma}\,dx\\
	& \qquad -(r-1)t_1^{r-2} \l[ t_1^{p-r} \into \xi(x) |\nabla u^*|^p\,dx-t_1^{-\gamma-r+1} \into a(x)|u^*|^{1-\gamma}\,dx\r]\\
	&=(p-r) t_1^{p-2} \into \xi(x) |\nabla u^*|^p\,dx+(r+\gamma-1) t_1^{-\gamma-1} \into a(x) |u^*|^{1-\gamma}\,dx\\
	&= t_1^{r-1} \eta'_{u^*}(t_1)>0,
      \end{split}
    \end{align}
    because of \eqref{10}.
    
    From \eqref{13} and \eqref{14} it follows that
    \begin{align*}
	(p+\gamma-1)t_1^p \into \xi(x) |\nabla u^*|^{p}\,dx -(r+\gamma-1) \lambda t_1^{r}\|u^*\|_r^r>0,
    \end{align*}
    which implies
    \begin{align}\label{15}
	t_1u^* \in N^+_\lambda, \quad \lambda \in (0,\hat{\lambda}^*].
    \end{align}
    
    Suppose that
    \begin{align}\label{16}
            \liminf_{n\to \infty} \into \xi(x) |\nabla u_n|^p\,dx> \into \xi(x)|\nabla u^*| \,dx.
    \end{align}
Applying \eqref{9}, \eqref{10} and \eqref{16} we get
    \begin{align}\label{17}
      \begin{split}
	&\liminf_{n\to\infty} \psi'_{u_n}(t_1)\\
	& =\liminf_{n\to\infty} \left[t_1^{p-1}\into \xi(x) |\nabla u_n|^p\,dx-t_1^{-\gamma} \into a(x) |u_n|^{1-\gamma}\,dx-\lambda t_1^{r-1} \|u_n\|_r^r\right]\\
	& > t_1^{p-1} \into \xi(x) |\nabla u^*|^p\,dx-t_1^{-\gamma} \into a(x) |u^*|^{1-\gamma}\,dx-\lambda t_1^{r-1}\|u^*\|_r^r\\
	&=\psi'_{u^*}(t_1)\\
	&= t_1^{r-1} \l[\eta_{u^*}(t_1)-\lambda\|u^*\|_r^r\r]=0.
      \end{split}
    \end{align}
    From \eqref{17} we see that there exists $n_0\in\N$ such that
    \begin{align}\label{18}
	\psi'_{u_n}(t_1)>0 \quad\text{for all }n \geq n_0.
    \end{align}
    Recall that $u_n\in N^+_\lambda \subseteq N_\lambda$ and $\psi'_{u_n}(t)=t^r \eta_{u_n}(t)$. Hence
    \begin{align*}
	\psi'_{u_n}(t)<0\quad\text{for all }t\in (0,1)\quad \text{and}\quad \psi'_{u_n}(1)=0. 
    \end{align*}
    Then, by \eqref{18}, it follows $t_1>1$.
    
    Since $\psi_{u^*}$ is decreasing on $(0,t_1]$, we have
    \begin{align}\label{19}
	\ph_\lambda(t_1u^*)\leq \ph_\lambda(u^*)<m^+_\lambda.
    \end{align}
    But recall that $t_1u^*\in N^+_\lambda$ because of \eqref{15}. So, by \eqref{19}, we obtain
    \begin{align*}
	m^+_\lambda \leq \ph_\lambda(t_1u^*)<m^+_\lambda,
    \end{align*}
    a contradiction. This proves that $u_n\to u^*$ in $\Wpzero{p}$, see Papageorgiou-Winkert \cite[p.\,225]{Papageorgiou-Winkert-2018}, and so, with regards to \eqref{8},
    \begin{align*}
	\ph_\lambda(u_n)\to \ph_{\lambda}(u^*)=m^+_\lambda<0.
    \end{align*}
    
    We know that $u_n\in N^+_\lambda$ for all $n\in\N$. This implies
    \begin{align*}
	(p+\gamma-1) \into \xi(x) |\nabla u_n|^p\,dx > \lambda (r+\gamma-1)\|u_n\|^r_r \quad\text{for all }n\in\N.
    \end{align*}
    Therefore
    \begin{align}\label{20}
	(p+\gamma-1)\into \xi(x) |\nabla u^*|^p \,dx\geq \lambda (r+\gamma-1) \|u^*\|_r^r.
    \end{align}
    On account of Proposition \ref{proposition_3}, since $\lambda \in (0,\hat{\lambda}^*]$, we cannot have equality in \eqref{20}. Therefore $u^*\in N^+_\lambda$ and finally we have
    \begin{align*}
	m^+_\lambda=\ph_\lambda(u^*)\quad\text{and}\quad u^*\in N^+_\lambda.
    \end{align*}
    Since we can always replace $u^*$ by $|u^*|$, we may assume that $u^*\geq 0$ with $u^*\neq 0$.
\end{proof}

The next lemma is inspired by Lemma 3 of Sun-Wu-Long \cite{Sun-Wu-Long-2001}. In what follows we denote by $B_\eps(0)$ the open $\eps$-ball in $\Wpzero{p}$ centered at the origin, that is,
\begin{align*}
    B_\eps(0)=\l\{u\in\Wpzero{p}\colon \|u\|<\eps\r\}.
\end{align*}

\begin{lemma}\label{lemma_5}
    If hypotheses H$_0$ hold and $u\in N^+_\lambda$, then there exist $\eps>0$ and a continuous function $\vartheta\colon B_\eps(0)\to \R_+$ such that
    \begin{align*}
	\vartheta(0)=1\quad\text{and}\quad \vartheta(y)(u+y) \in N^{\pm}_\lambda \quad\text{for all }y\in B_\eps(0).
    \end{align*}
\end{lemma}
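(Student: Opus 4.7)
The plan is to reduce the Nehari condition to a scalar equation in the scaling parameter and apply the one-variable implicit function theorem. Define
$$F \colon (0,\infty) \times \Wpzero{p} \to \R, \qquad F(t,y) = t^{p-1}\into \xi(x)|\nabla(u+y)|^p\,dx - t^{-\gamma}\into a(x)|u+y|^{1-\gamma}\,dx - \lambda t^{r-1}\|u+y\|_r^r,$$
so that, after dividing the Nehari identity by $t>0$, the membership $t(u+y) \in N_\lambda$ is equivalent to $F(t,y)=0$. For $\eps<\|u\|$ the argument $u+y$ is nonzero on $B_\eps(0)$, and the integrals are finite by the Sobolev embedding. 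Since $u \in N^+_\lambda \subseteq N_\lambda$, one has $F(1,0)=0$, which is the base point for the implicit function theorem.

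Differentiating in $t$, evaluating at $(1,0)$, and eliminating $\into a(x)|u|^{1-\gamma}\,dx$ by means of the Nehari identity for $u$, I would obtain
$$\partial_t F(1,0) = (p+\gamma-1)\into \xi(x)|\nabla u|^p\,dx - \lambda(r+\gamma-1)\|u\|_r^r,$$
which is strictly positive precisely because $u \in N^+_\lambda$. This is the transversality condition. Joint continuity of $F$ and of $\partial_t F$ in $(t,y)$ is clear for the Dirichlet integral and the $L^r$-norm; for the singular term, although $y \mapsto |u+y|^{1-\gamma}$ is not differentiable in general, the elementary inequality $\l| |\alpha|^{1-\gamma}-|\beta|^{1-\gamma} \r| \leq |\alpha-\beta|^{1-\gamma}$ combined with H\"older's inequality and the Sobolev embedding $\Wpzero{p} \hookrightarrow L^q(\Omega)$ yields continuity of $y \mapsto \into a(x)|u+y|^{1-\gamma}\,dx$, which is all that is needed.

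The scalar implicit function theorem then produces $\eps > 0$ and a continuous $\vartheta \colon B_\eps(0) \to \R_+$ with $\vartheta(0)=1$ and $F(\vartheta(y),y)=0$, so that $\vartheta(y)(u+y) \in N_\lambda$ for every $y \in B_\eps(0)$. To upgrade to $N^+_\lambda$, I would note that
$$y \mapsto (p+\gamma-1)\into \xi(x)|\nabla(\vartheta(y)(u+y))|^p\,dx - \lambda(r+\gamma-1)\|\vartheta(y)(u+y)\|_r^r$$
is continuous in $y$ and strictly positive at $y=0$ by the computation above, so after possibly shrinking $\eps$ it remains positive; the symmetric argument starting from $u \in N^-_\lambda$ accounts for the other case implicit in the $\pm$ notation. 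The main obstacle is that $\ph_\lambda$ itself is not $C^1$ on $\Wpzero{p}$, so a direct implicit-function argument on a Nehari functional fails; the device of fibering through the scalar parameter $t$ bypasses this, because only differentiability in $t$ and mere continuity in $y$ are required of $F$.
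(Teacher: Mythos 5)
Your proposal is correct and follows essentially the same route as the paper: the paper applies the implicit function theorem to $L(y,t)=t^{p+\gamma-1}\into \xi(x)|\nabla(u+y)|^p\,dx-\into a(x)|u+y|^{1-\gamma}\,dx-\lambda t^{r+\gamma-1}\|u+y\|_r^r$, which is just your $F(t,y)$ multiplied by $t^{\gamma}$, and uses the same transversality condition $(p+\gamma-1)\into\xi(x)|\nabla u|^p\,dx-\lambda(r+\gamma-1)\|u\|_r^r>0$ followed by shrinking $\eps$ to land in $N^{+}_\lambda$. Your additional remarks on the continuity of the singular term and on why only continuity in $y$ is needed are correct refinements of the same argument.
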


\begin{proof}
    We do the proof only for $N^+_\lambda$, the proof for $N^-_\lambda$ works in the same way. So, let $L\colon\Wpzero{p}\times (0,\infty)\to \R$ be defined by
    \begin{align*}
	L(y,t)=t^{p+\gamma-1} \into \xi(x) |\nabla (u+y)|^p \,dx-\into a(x) |u+y|^{1-\gamma}\,dx -\lambda t^{r+\gamma-1} \|u+y\|_r^r.
    \end{align*}
    Since $u\in N^+_\lambda \subseteq N_\lambda$, one has $L(0,1)=0$. Moreover, because $u \in N^+_\lambda$, it holds
    \begin{align*}
	L'_t(0,1)=(p+\gamma-1) \into \xi(x) |\nabla u|^p \,dx-\lambda (r+\gamma-1)\|u\|_r^r>0.
    \end{align*}
    Then, by the implicit function theorem, see Gasi\'{n}ski-Papageorgiou \cite[p.\,481]{Gasinski-Papageorgiou-2006}, we can find $\eps>0$ and a continuous map $\vartheta\colon B_\eps(0)\to \R_+$ such that
    \begin{align*}
	\vartheta(0)=1\quad\text{and}\quad \vartheta(y)(u+y) \in N_\lambda \quad\text{for all } y\in B_\eps(0).
    \end{align*}
    Choosing $\eps>0$ even smaller if necessary, we can have
    \begin{align*}
	\vartheta(0)=1\quad\text{and}\quad \vartheta(y)(u+y) \in N^{+}_\lambda \quad\text{for all }y\in B_\eps(0).
    \end{align*}
\end{proof}

\begin{proposition}\label{proposition_6}
    If hypotheses H$_0$ hold, $\lambda \in (0,\hat{\lambda}^*]$ and $h\in \Wpzero{p}$, then we can find $b>0$ such that
    \begin{align*}
	\ph_\lambda(u^*) \leq \ph(u^*+th)\quad\text{for all }t\in [0,b].
    \end{align*}
\end{proposition}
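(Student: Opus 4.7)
The plan is to use Lemma \ref{lemma_5} to project every nearby perturbation $u^*+th$ back onto $N^+_\lambda$, invoke the minimality $\ph_\lambda(u^*) = m^+_\lambda$, and then pay the projection cost by exploiting the one-dimensional fibering analysis already carried out in the proof of Proposition \ref{proposition_4}. The case $h=0$ is trivial, so assume $h\neq 0$.

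First, Lemma \ref{lemma_5} applied at $u = u^*$ supplies $\eps>0$ and a continuous map $\vartheta: B_\eps(0)\to \R_+$ with $\vartheta(0)=1$ and $\vartheta(y)(u^*+y)\in N^+_\lambda$ for every $y\in B_\eps(0)$. Pick $b_1\in(0,\eps/\|h\|)$ so that $th\in B_\eps(0)$ whenever $t\in[0,b_1]$. Since $u^*$ realizes $m^+_\lambda=\inf_{N^+_\lambda}\ph_\lambda$,
\begin{align*}
  \ph_\lambda(u^*) = m^+_\lambda \leq \ph_\lambda\bigl(\vartheta(th)(u^* + th)\bigr) \quad \text{for all } t \in [0, b_1].
\end{align*}
It then remains to find $b\in(0,b_1]$ with $\ph_\lambda\bigl(\vartheta(th)(u^*+th)\bigr)\leq \ph_\lambda(u^*+th)$ on $[0,b]$. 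To compare the two, introduce the fibering map $\psi_v(s):=\ph_\lambda(sv)$ with $v=u^*+th$; the two values in question are $\psi_v(\vartheta(th))$ and $\psi_v(1)$, respectively.

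Recall from the proof of Proposition \ref{proposition_4} that the critical points of $\psi_v$ on $(0,\infty)$ are the two solutions $t_1(v)<t_2(v)$ of $\eta_v(s)=\lambda\|v\|_r^r$, and $\psi_v$ is strictly decreasing on $(0,t_1(v))$ and on $(t_2(v),\infty)$ and strictly increasing on $(t_1(v),t_2(v))$. The inclusion $\vartheta(th)(u^*+th)\in N^+_\lambda$ forces $\vartheta(th)=t_1(u^*+th)$. At $v=u^*$ we have $t_1(u^*)=1$ and $t_2(u^*)>1$; the transversality $\eta'_{u^*}(t_2(u^*))<0$ noted in \eqref{10}, combined with the joint continuity of $(v,s)\mapsto \eta_v(s)-\lambda\|v\|_r^r$, yields continuous dependence of $t_2(u^*+th)$ on $t$ near $t=0$. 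Together with the continuity of $\vartheta$ this gives $\vartheta(th)\to 1$ and $t_2(u^*+th)\to t_2(u^*)>1$ as $t\to 0^+$, so shrinking $b_1$ to some $b\in(0,b_1]$ we secure $1<t_2(u^*+th)$ for all $t\in[0,b]$. On the interval $(0,t_2(u^*+th))$ the function $\psi_{u^*+th}$ attains its minimum at $\vartheta(th)$, hence $\psi_{u^*+th}(\vartheta(th))\leq \psi_{u^*+th}(1)$, which is the missing inequality.

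The main obstacle is confining $s=1$ to the basin of the local minimum $\vartheta(th)$: both $t_1(u^*+th)=\vartheta(th)$ and the neighboring local maximum $t_2(u^*+th)$ shift with $t$, and one must verify that $t_2$ does not drift down to $1$ for small $t$. This control rests precisely on the nondegeneracy $\eta'_{u^*}(t_2(u^*))<0$ from \eqref{10}; without such transversality the continuous tracking of $t_2(\cdot)$ is not automatic and the fibering comparison could collapse.
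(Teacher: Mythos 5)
Your argument is correct, and its skeleton coincides with the paper's: apply Lemma \ref{lemma_5} at $u^*$ to project $u^*+th$ onto $N^+_\lambda$, invoke the minimality $\ph_\lambda(u^*)=m^+_\lambda$, and then compare $\ph_\lambda\l(\vartheta(th)(u^*+th)\r)$ with $\ph_\lambda(u^*+th)$ along the fiber through $u^*+th$. The difference lies in how that last comparison is justified. The paper introduces the auxiliary function $\mu_h(t)$ of \eqref{21}, whose value at $t=0$ equals $(p+\gamma-1)\into \xi(x)|\nabla u^*|^p\,dx-\lambda(r+\gamma-1)\|u^*\|_r^r>0$ by the Nehari identity; this is the second derivative of the fibering map at $s=1$, and its positivity for small $t$ is the paper's (rather tersely applied) device for the inequality $\ph_\lambda(\hat{\vartheta}(t)(u^*+th))\le\ph_\lambda(u^*+th)$. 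You instead use the global shape of $\psi_{u^*+th}$ --- strictly decreasing up to the local minimum $t_1(u^*+th)=\vartheta(th)$ and strictly increasing up to the local maximum $t_2(u^*+th)$ --- and you control $t_2(u^*+th)$ by the transversality $\eta'_{u^*}(t_2(u^*))<0$ so that $s=1$ remains in the basin $(0,t_2(u^*+th))$ of the local minimum for small $t$. This makes explicit a step the paper leaves implicit, and your insistence on tracking $t_2$ (rather than deducing $1<t_2(u^*+th)$ merely from $\vartheta(th)\to 1$) is the right instinct, since $t_1$ and $t_2$ could a priori both approach $1$. The only point worth adding is why $\eta_{u^*+th}(s)=\lambda\|u^*+th\|_r^r$ has \emph{two} distinct roots for small $t$: this follows because Lemma \ref{lemma_5} places $\vartheta(th)(u^*+th)$ in $N^+_\lambda$, so $\vartheta(th)$ is a transversal crossing on the increasing branch of $\eta_{u^*+th}$, which forces a second crossing beyond the maximizer $t_0(u^*+th)$; alternatively, one can note that $\eta'_v(s)<0$ for all $s>t_0(v)$, so the transversality at $t_2$ you use is automatic. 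With these two small remarks supplied, your proof is complete.
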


\begin{proof}
    We consider the function $\mu_h\colon [0,\infty)\to\R$ defined by
    \begin{align}\label{21}
      \begin{split}
	\mu_h(t)
	& =(p-1)\into \xi(x) |\nabla u^*+t\nabla h|^p\,dx\\
	& \quad +\gamma\into a(x) |u^*+th|^{1-\gamma}\,dx -\lambda(r-1) \|u^*\|_r^r.
      \end{split}
    \end{align}
    Recall that $u^*\in N^+_\lambda \subseteq N_\lambda$, see Proposition \ref{proposition_4}. Thus, we have
    \begin{align}\label{22}
	& \gamma\into \xi(x) |u^*|^{1-\gamma}\,dx=\gamma\into \xi(x) |\nabla u^*|^p\,dx-\lambda \gamma \|u^*\|_r^r
    \end{align}
    and
    \begin{align}\label{23}
	& (p+\gamma-1) \into \xi(x) |\nabla u^*|^p \,dx-\lambda (r+\gamma-1) \|u\|^r_r>0.
    \end{align}
    Combining \eqref{21}, \eqref{22} and \eqref{23} we obtain that
    \begin{align}\label{24}
	\mu_h(0)>0.
    \end{align}
    The function $\mu_h$ is continuous. So, we can find $b_0>0$ such that 
    \begin{align*}
	\mu_h(t)>0 \quad\text{for all }t\in (0,b_0),
    \end{align*}
    see \eqref{24}. Lemma \ref{lemma_5} implies that for every $t\in [0,b_0)$, we can find $\hat{\vartheta}(t)>0$ such that
    \begin{align}\label{25}
	\hat{\vartheta}(t) (u^*+th) \in N^+_\lambda\quad\text{and}\quad \hat{\vartheta}(t)\to 1 \text{ as }t\to 0^+.
    \end{align}
    Taking \eqref{25} into account we finally reach that
    \begin{align*}
	m^+_\lambda
	& = \ph_\lambda(u^*) \leq \ph_\lambda(\hat{\vartheta}(t)(u^*+th)) \quad\text{for all }t\in [0,b_0)\\
	& \leq \ph_\lambda(u^*+th) \quad\text{for all }t\in [0,b) \text{ with }b \leq b_0.
    \end{align*}
\end{proof}

The next proposition shows that $N^+_\lambda$ is a natural constraint for the functional $\ph_\lambda$, see 
Papageorgiou-R\u{a}dulescu-Repov\v{s} \cite[p.\,425]{Papageorgiou-Radulescu-Repovs-2019}.

\begin{proposition}\label{proposition_7}
    If hypotheses H$_0$ hold and $\lambda \in (0,\hat{\lambda}^*)$, then $u^*$ is a weak solution of problem \eqref{problem}.
\end{proposition}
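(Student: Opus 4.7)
The plan is to extract from Proposition~\ref{proposition_6} a one-sided variational inequality for nonnegative test functions, and then upgrade it to an equality for arbitrary test functions by a truncation argument applied to $(u^*+\eps v)^+$.

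\emph{Step 1: one-sided inequality for $h\geq 0$.} Fix $h\in\Wpzero{p}$ with $h\geq 0$. Proposition~\ref{proposition_6} yields $b>0$ such that $t^{-1}[\ph_\lambda(u^*+th)-\ph_\lambda(u^*)]\geq 0$ for $t\in (0,b]$. As $t\to 0^+$, the $p$-gradient and $r$-power pieces converge by dominated convergence (using $u^*,h\geq 0$) to $\into\xi(x)|\nabla u^*|^{p-2}\nabla u^*\cdot \nabla h\,dx$ and $\lambda\into(u^*)^{r-1}h\,dx$. For the singular term, the concavity of $s\mapsto s^{1-\gamma}$ (since $0<1-\gamma<1$) forces the quotient $[t(1-\gamma)]^{-1}[(u^*+th)^{1-\gamma}-(u^*)^{1-\gamma}]$ to be pointwise nondecreasing as $t\searrow 0$, with limit $(u^*)^{-\gamma}h$ (or $+\infty$ where $u^*=0<h$); monotone convergence then yields
\begin{align*}
\into a(x)(u^*)^{-\gamma}h\,dx \leq \into\xi(x)|\nabla u^*|^{p-2}\nabla u^*\cdot \nabla h\,dx - \lambda\into(u^*)^{r-1}h\,dx.
\end{align*}
In particular the singular integral is finite for every nonnegative $h\in\Wpzero{p}$; choosing any such $h$ that is strictly positive a.e.\ in $\Omega$ then forces $u^*>0$ a.e.\ in $\Omega$, because $a>0$ a.e.

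\emph{Step 2: extension to arbitrary $v$ by truncation.} For $v\in\Wpzero{p}$ and $\eps>0$, apply Step~1 to $h_\eps=(u^*+\eps v)^+\in\Wpzero{p}$. Setting $\Omega_\eps=\{u^*+\eps v>0\}$, expand each of the three terms on $\Omega_\eps$, split off the corresponding integrals over $\Omega\setminus\Omega_\eps$, and use the Nehari identity $\into\xi|\nabla u^*|^p\,dx=\into a(u^*)^{1-\gamma}\,dx+\lambda\|u^*\|_r^r$ to cancel the $O(1)$ contributions. With
\begin{align*}
I(v)=\into\xi|\nabla u^*|^{p-2}\nabla u^*\cdot \nabla v\,dx-\into a(u^*)^{-\gamma}v\,dx-\lambda\into(u^*)^{r-1}v\,dx,
\end{align*}
the resulting inequality reads
\begin{align*}
\eps I(v) \geq \int_{\Omega\setminus\Omega_\eps}\xi|\nabla u^*|^p\,dx+\eps\int_{\Omega\setminus\Omega_\eps}\xi|\nabla u^*|^{p-2}\nabla u^*\cdot \nabla v\,dx-R_3(\eps)-R_4(\eps),
\end{align*}
where $R_3(\eps)=\int_{\Omega\setminus\Omega_\eps}a(u^*)^{-\gamma}(u^*+\eps v)\,dx$ and $R_4(\eps)=\lambda\int_{\Omega\setminus\Omega_\eps}(u^*)^{r-1}(u^*+\eps v)\,dx$ are both $\leq 0$ since $u^*+\eps v\leq 0$ on $\Omega\setminus\Omega_\eps$. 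Because $u^*>0$ a.e., $|\Omega\setminus\Omega_\eps|\to 0$ as $\eps\to 0^+$, so the middle term on the right is $o(\eps)$ by absolute continuity of the Lebesgue integral, while the first term and $-R_3(\eps),-R_4(\eps)$ are nonnegative. Dividing by $\eps$ and letting $\eps\to 0^+$ yields $I(v)\geq 0$. Replacing $v$ by $-v$ and invoking linearity of $I$ gives $I(v)\leq 0$, so $I(v)=0$ for every $v\in\Wpzero{p}$, which is exactly the weak formulation of \eqref{problem}.

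\emph{Main obstacle.} The delicate part is the bookkeeping of Step~2: one must correctly expand the test $(u^*+\eps v)^+$ on $\Omega_\eps$, use the $N_\lambda$ identity to kill the leading contributions, and keep track of the sign of each remainder on $\Omega\setminus\Omega_\eps$. The fortunate feature is that all remainders come with the right sign for free thanks to the pointwise inequality $u^*+\eps v\leq 0$ there, and the only term needing a genuine decay estimate is the cross piece $\eps\int_{\Omega\setminus\Omega_\eps}\xi|\nabla u^*|^{p-2}\nabla u^*\cdot \nabla v\,dx$, which is controlled via $\xi|\nabla u^*|^{p-1}|\nabla v|\in L^1(\Omega)$ together with $|\Omega\setminus\Omega_\eps|\to 0$.
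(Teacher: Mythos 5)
Your proof is correct, but it takes a more careful (and genuinely different) route than the paper. The paper's own proof is very short: it divides the inequality $0\leq \ph_\lambda(u^*+th)-\ph_\lambda(u^*)$ by $t$, passes to the limit $t\to 0^+$ for an \emph{arbitrary} $h\in\Wpzero{p}$ to get the one-sided inequality, and then immediately deduces equality by replacing $h$ with $-h$; the convergence of the singular term for sign-changing $h$ (and in particular the finiteness of $\into a(x)(u^*)^{-\gamma}h\,dx$ before one knows $u^*>0$ a.e., which the paper only obtains later via Harnack) is asserted rather than justified. You instead follow the classical two-step scheme going back to Sun--Wu--Long and Giacomoni--Schindler--Tak\'a\v{c}: first the inequality for $h\geq 0$ via monotone convergence (the concavity of $s\mapsto s^{1-\gamma}$ making the difference quotients monotone as $t\searrow 0$), from which you extract both the integrability of $a(u^*)^{-\gamma}h$ and the positivity $u^*>0$ a.e.; then the test function $(u^*+\eps v)^+$ together with the Nehari identity to cancel the zeroth-order terms, with all remainders on $\{u^*+\eps v\leq 0\}$ having the favourable sign and the cross term controlled by absolute continuity of the integral. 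Your bookkeeping in Step 2 checks out (the $O(1)$ terms cancel exactly because $u^*\in N_\lambda$, and $R_3,R_4\leq 0$ as claimed), and the linearity of $I$ needed at the end is secured by applying Step 1 to $v^\pm$. The net effect is that your argument rigorously fills the gap the paper leaves open, at the cost of being considerably longer; the paper's version buys brevity by leaning on the (standard but unproved here) limit passage in the singular term.
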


\begin{proof}
    Let $h\in \Wpzero{p}$. From Proposition \ref{proposition_6} we know that
    \begin{align*}
	0 \leq \ph_\lambda(u^*+th)-\ph_\lambda(u^*) \quad\text{for all }0<t<b.
    \end{align*}
    This means
    \begin{align*}
	& \frac{1}{1-\gamma} \into a(x) \l[|u^*+th|^{1-\gamma}-|u^*|^{1-\gamma}\r]\,dx\\
	& \leq \frac{1}{p}\into \xi(x) \l(|\nabla (u^*+th)|^p-|\nabla u^*|^p\r)\,dx -\frac{\lambda}{r} \l[\|u^*+th\|_r^r-\|u^*\|_r^r\r].
    \end{align*}
    Multiplying by $\frac{1}{t}$ and letting $t\to 0^+$ gives 
    \begin{align*}
	\into a(x) (u^*)^{-\gamma}h\,dx \leq \into \xi(x) |\nabla u^*|^{p-2} \nabla u^* \cdot \nabla h\,dx-\lambda \into (u^*)^{r-1}h\,dx
    \end{align*}
    for all $h\in \Wpzero{p}$. Hence,
    \begin{align*}
	\into \xi(x) |\nabla u^*|^{p-2} \nabla u^* \cdot \nabla h\,dx
	= \into a(x) (u^*)^{-\gamma}h\,dx +\lambda \into (u^*)^{r-1}h\,dx
    \end{align*}
    for all $h\in \Wpzero{p}$. Thus, $u^*$ is a weak solution of \eqref{problem}.
\end{proof}

Now we are ready to generate the first positive solution of problem \eqref{problem}.

\begin{proposition}
    If hypotheses H$_0$ hold and $\lambda \in (0,\hat{\lambda}^*)$, then problem \eqref{problem} admits a positive solution $u^*\in \Wpzero{p}$ such that $u \in \Linf$, $u^*(x)>0$ for a.\,a.\,$x\in \Omega$ and $\ph_\lambda(u^*)<0$.
\end{proposition}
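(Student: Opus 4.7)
The plan is to assemble the previous propositions into a single weak solution of \eqref{problem} and then to upgrade it by establishing $L^\infty$-boundedness and a.\,a.\ positivity.

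Proposition \ref{proposition_4} furnishes $u^*\in N_\lambda^+$ with $u^*\geq 0$, $u^*\neq 0$ and $\ph_\lambda(u^*)=m_\lambda^+$; since the infimum $m_\lambda^+$ was already shown to be negative, we have $\ph_\lambda(u^*)<0$. Proposition \ref{proposition_7} upgrades $u^*$ to a weak solution of \eqref{problem}. It remains to check $u^*(x)>0$ for a.\,a.\ $x\in\Omega$ and $u^*\in\Linf$.

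For a.\,a.\ positivity I would exploit the compatibility of the weak formulation with the singular term. Fix any $h\in C^\infty_c(\Omega)$ with $h\geq 0$. The identity
\begin{align*}
\into\xi(x)|\nabla u^*|^{p-2}\nabla u^*\cdot\nabla h\,dx=\into a(x)(u^*)^{-\gamma}h\,dx+\lambda\into(u^*)^{r-1}h\,dx
\end{align*}
forces the right-hand side to be finite, since the left-hand side is controlled by $\|\xi\|_\infty\|\nabla u^*\|_p^{p-1}\|\nabla h\|_p$. Because $a(x)>0$ for a.\,a.\ $x\in\Omega$ by H$_0$ and $h$ may be chosen strictly positive on any prescribed compact $K\Subset\Omega$, the set $\{u^*=0\}\cap K$ has Lebesgue measure zero (otherwise the integral $\into a(x)(u^*)^{-\gamma}h\,dx$ would diverge). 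Exhausting $\Omega$ by such $K$ yields $u^*(x)>0$ for a.\,a.\ $x\in\Omega$.

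For the $L^\infty$-bound I would run a Moser iteration tailored to the singular right-hand side. For $k\geq 1$ and $\beta\geq 0$, I would test the weak formulation with $h_{k,\beta}=[(u^*-k)_+]^{p\beta+1}\in\Wpzero{p}$. On the support $\{u^*>k\}\subseteq\{u^*\geq 1\}$ the singular term is controlled by $a(x)(u^*)^{-\gamma}\leq\|a\|_\infty$. Combined with the uniform ellipticity $\xi\geq\xi_0>0$, the Sobolev embedding $\Wpzero{p}\hookrightarrow L^{p^*}(\Omega)$, and the subcritical growth $r<p^*$, this yields a Caccioppoli-type estimate that bootstraps in $\beta$ to $u^*\in\Linf$.

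The principal obstacle is precisely this last step: since Lieberman's global regularity theory and the strong maximum principle of Pucci--Serrin are unavailable for our discontinuous weight $\xi$, no shortcut to $L^\infty$ exists and the bound must be produced by a hands-on iteration. The decisive device is the threshold $k\geq 1$, which neutralizes the singular contribution on the support of the test function; thereafter the weight enters only through the uniform ellipticity constant $\xi_0$, and the classical subcritical Moser scheme proceeds unchanged.
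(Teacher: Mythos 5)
Your assembly of Propositions \ref{proposition_4} and \ref{proposition_7} to get a nonnegative weak solution with $\ph_\lambda(u^*)=m^+_\lambda<0$ is exactly what the paper does; where you diverge is in the two remaining analytic facts, and both of your routes are sound. For a.\,e.\ positivity the paper simply invokes the weak Harnack inequality of Pucci--Serrin (which, unlike the $C^{1,\alpha}$ theory and the Hopf lemma, tolerates a merely measurable weight $\xi$ bounded between positive constants), whereas you extract positivity directly from the finiteness of $\into a(x)(u^*)^{-\gamma}h\,dx$ forced by the weak formulation: since the Fatou passage in Proposition \ref{proposition_7} produces $+\infty$ on $\{u^*=0\}\cap\{h>0\}$ and $a>0$ a.\,e., that set must be null. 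This is more self-contained and arguably more in the spirit of the singular term; it yields exactly the a.\,e.\ positivity claimed, though Harnack would give the slightly stronger local essential lower bounds. For boundedness the paper cites Lemma A.6 of Giacomoni--Schindler--Tak\'a\v{c}, while you sketch the underlying Moser iteration yourself; your key observation --- that truncating at level $k\geq 1$ makes $a(x)(u^*)^{-\gamma}\leq\|a\|_\infty$ on the support of the test function, after which only the uniform ellipticity constant $\xi_0$ and the subcritical exponent $r<p^*$ matter --- is precisely why the cited lemma applies here. Two small caveats: your test function $[(u^*-k)_+]^{p\beta+1}$ is not a priori in $\Wpzero{p}$ for large $\beta$, so the standard truncation $[\min(u^*,M)-k]_+^{p\beta}(u^*-k)_+$ (letting $M\to\infty$ afterwards) is needed to make the iteration admissible; and the iteration itself is only sketched, so as written you are replacing one citation by another (to the classical subcritical $L^\infty$ theory) rather than by a complete argument. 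Neither point affects correctness.
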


\begin{proof}
    According to Proposition \ref{proposition_4} there exists $u^*\in \Wpzero{p}$ such that 
    \begin{align*}
	u^*\in N^+_\lambda \quad\text{and}\quad  m^+_\lambda=\ph_\lambda(u^*)<0, \quad u^*\geq 0.
    \end{align*}
    From Proposition \ref{proposition_7} we know that $u^*$ is a weak solution of problem \eqref{problem}.
    
    From Giacomoni-Schindler-Tak\'{a}\v{c} \cite[Lemma A.6, p.\,142]{Giacomoni-Schindler-Takac-2007} we have that $u^*\in \Linf$. Furthermore, the Harnack inequality, see Pucci-Serrin \cite[p.\,163]{Pucci-Serrin-2007} implies that
    \begin{align*}
	u^*(x) >0 \quad\text{for a.\,a.\,}x\in\Omega.
    \end{align*}
\end{proof}

Now we start looking for a second positive solution. To this end, we will use the manifold $N^-_\lambda$.

\begin{proposition}
    If hypotheses H$_0$ hold, then there exists $\hat{\lambda}^*_0 \in (0,\hat{\lambda}^*]$ such that $\ph_\lambda\big|_{N^-_\lambda} \geq 0$ for all $0<\lambda\leq\hat{\lambda}_0^*$.
\end{proposition}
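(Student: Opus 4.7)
The plan is to combine two facts: on one hand, for $u\in N^-_\lambda$ the strict inequality defining $N^-_\lambda$, together with $\xi\geq\xi_0$ and the Sobolev embedding, forces $\|u\|$ to be large when $\lambda$ is small; on the other hand, the coercivity estimate already established in Proposition \ref{proposition_1} says that on all of $N_\lambda$ the functional $\ph_\lambda$ is controlled from below by $c_1\|u\|^p-c_2\|u\|^{1-\gamma}$, which is nonnegative as soon as $\|u\|$ is sufficiently large. Matching these two bounds will give the desired $\hat{\lambda}^*_0$.

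First I would take $u\in N^-_\lambda$ and use
\begin{align*}
    (p+\gamma-1)\into\xi(x)|\nabla u|^p\,dx<\lambda(r+\gamma-1)\|u\|_r^r,
\end{align*}
together with H$_0$ and the Sobolev embedding $\Wpzero{p}\hookrightarrow\Lp{r}$ (valid since $r<p^*$), to estimate
\begin{align*}
    (p+\gamma-1)\xi_0\|u\|^p\leq\lambda(r+\gamma-1)c_S^r\|u\|^r.
\end{align*}
Since $u\neq 0$ and $r>p$, this yields a lower bound of the form
\begin{align*}
    \|u\|\geq\left(\frac{K}{\lambda}\right)^{\frac{1}{r-p}}
\end{align*}
for a constant $K=K(\xi_0,p,r,\gamma,c_S)>0$ independent of $\lambda$.

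Next I would invoke the estimate from the proof of Proposition \ref{proposition_1}, namely
\begin{align*}
    \ph_\lambda(u)\geq c_1\|u\|^p-c_2\|u\|^{1-\gamma}=\|u\|^{1-\gamma}\left(c_1\|u\|^{p+\gamma-1}-c_2\right)
\end{align*}
for $u\in N_\lambda\supseteq N^-_\lambda$, where $c_1,c_2>0$ depend only on the data. Combining with the lower bound for $\|u\|$, the bracket is nonnegative as soon as
\begin{align*}
    \left(\frac{K}{\lambda}\right)^{\frac{p+\gamma-1}{r-p}}\geq\frac{c_2}{c_1},
\end{align*}
that is, for all $\lambda$ small enough. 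I would then set
\begin{align*}
    \hat{\lambda}^*_0=\min\left\{\hat{\lambda}^*,\;K\left(\frac{c_1}{c_2}\right)^{\frac{r-p}{p+\gamma-1}}\right\}>0,
\end{align*}
so that for every $\lambda\in(0,\hat{\lambda}^*_0]$ and every $u\in N^-_\lambda$ we obtain $\ph_\lambda(u)\geq 0$, which is the claim.

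There is no serious obstacle in this argument; the only point to be careful about is that $p+\gamma-1>0$ (guaranteed by $p>1$ and $\gamma>0$) so that the power of $\|u\|$ inside the bracket is indeed positive and the lower bound on $\|u\|$ translates into the sign condition we need. All constants depend only on $p,r,\gamma,\xi_0,\|a\|_\infty$ and the Sobolev constant, so they are independent of $\lambda$, which is exactly what allows the choice of $\hat{\lambda}^*_0$.
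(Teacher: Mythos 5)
Your argument is correct. It differs from the paper's proof in two respects, both harmless and arguably improvements. First, the paper argues by contradiction: it assumes that for every $\lambda>0$ some $u\in N^-_\lambda$ has $\ph_\lambda(u)<0$, derives from this and the Nehari identity an upper bound $\|u\|_r\leq c_7\lambda^{-1/(r+\gamma-1)}$, derives from the $N^-_\lambda$ condition a lower bound $\|u\|_r\geq c_8\lambda^{-1/(r-p)}$, and gets a contradiction for small $\lambda$ because $\frac{1}{r-p}>\frac{1}{r+\gamma-1}$ (this uses $r-p<r+\gamma-1$, i.e.\ $1-\gamma<p$). You instead argue directly, and you work with the Sobolev norm $\|u\|$ rather than $\|u\|_r$. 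Second, you reuse the coercivity estimate $\ph_\lambda(u)\geq c_1\|u\|^p-c_2\|u\|^{1-\gamma}$ already established in the proof of Proposition \ref{proposition_1} (whose constants are indeed $\lambda$-independent, since the Nehari identity eliminates $\lambda$ before the estimate is made), whereas the paper re-derives an analogous bound in terms of $\|u\|_r$ from scratch. Your route is shorter, avoids the contradiction scaffolding, and produces an explicit admissible value of $\hat{\lambda}^*_0$; the only points that need to be (and are) checked are that $u\neq 0$ justifies dividing by $\|u\|^p$ and that $p+\gamma-1>0$ so the large-norm lower bound makes the bracket nonnegative. Both proofs rest on the same two pillars: membership in $N^-_\lambda$ forces the norm to blow up as $\lambda\to 0^+$, while the Nehari constraint bounds $\ph_\lambda$ from below by a $\lambda$-free expression that is nonnegative for large norms.
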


\begin{proof}
    Let $u\in N_\lambda$. From the definition of $N^-_\lambda$ we have
    \begin{align*}
	(p+\gamma-1) \into \xi(x) |\nabla u|^p\,dx<\lambda (r+\gamma-1) \|u\|^r_r,
    \end{align*}
    which implies
    \begin{align*}
	(p+\gamma-1) \xi_0 \|\nabla u\|^p_p<\lambda (r+\gamma-1) \|u\|^r_r.
    \end{align*}
    Then, by the embedding $\Wpzero{p}\hookrightarrow \Lp{r}$, it follows
    \begin{align*}
	(p+\gamma-1) \xi_0 c_5 \|u\|^p_r<\lambda (r+\gamma-1) \|u\|^r_r
    \end{align*}
    for some $c_5>0$. Therefore
    \begin{align}\label{26}
	\l[\frac{(p+\gamma-1)\xi_0c_5}{\lambda(r+\gamma-1)}\r]^{\frac{1}{r-p}} \leq \|u\|_r.
    \end{align}
    Suppose that the result of the proposition is not true. This means that for every $\lambda>0$ there exists $u \in N^-_\lambda$ such that $\ph_\lambda(u)<0$, that is,
    \begin{align}\label{27}
	\frac{1}{p} \into \xi(x) |\nabla u|^p\,dx -\frac{1}{1-\gamma} \into a(x)|u|^{1-\gamma}\,dx-\frac{\lambda}{r} \|u\|_r^r <0.
    \end{align}
    
    On the other hand, since $u \in N^-_\lambda\subseteq N_\lambda$, we have
    \begin{align}\label{28}
	\into \xi(x) |\nabla u|^p\,dx = \into a(x) |u|^{1-\gamma}\,dx+\lambda \|u\|_r^r.
    \end{align}
    Using \eqref{28} in \eqref{27} yields
    \begin{align*}
	\l[\frac{1}{p}-\frac{1}{1-\gamma}\r] \into a(x) |u|^{1-\gamma}\,dx+\lambda \l[\frac{1}{p}-\frac{1}{r}\right] \|u\|_r^r<0,
    \end{align*}
    which implies
    \begin{align*}
	\lambda \frac{r-p}{pr} \|u\|_r^r \leq \frac{p+\gamma-1}{p(1-\gamma)} \into a(x) |u|^{1-\gamma}\,dx
	\leq \frac{p+\gamma-1}{p(1-\gamma)} c_6 \|u\|_r^{1-\gamma}
    \end{align*}
    for some $c_6>0$. Hence
    \begin{align*}
	\|u\|_r \leq \l[\frac{(p+\gamma-1)rc_6}{\lambda(1-\gamma)(r-p)}\r]^{\frac{1}{r+\gamma-1}}
    \end{align*}
    and so
    \begin{align}\label{29}
	\|u\|_r \leq c_7 \l(\frac{1}{\lambda}\r)^{\frac{1}{r+\gamma-1}}
    \end{align}
    for some $c_7>0$.
    
    Now we use \eqref{29} in \eqref{26} and obtain
    \begin{align*}
	c_8 \l(\frac{1}{\lambda}\r)^{\frac{1}{r-p}} \leq c_7 \l(\frac{1}{\lambda}\r)^{\frac{1}{r+\gamma-1}} \quad\text{with}\quad c_8=\l[\frac{(p+\gamma-1)\xi_0}{r+\gamma-1}\r]^{\frac{1}{r-p}}>0.
    \end{align*}
    This implies
    \begin{align*}
	c_9 \leq \lambda^{\displaystyle\frac{p+\gamma-1}{\displaystyle(r+\gamma-1)(r-p)}}\quad\text{with}\quad c_9=\frac{c_8}{c_7}>0.
    \end{align*}
    Letting $\lambda \to 0^+$ leads to a contradiction. So, we can find $0<\hat{\lambda}_0^*\leq \hat{\lambda}^*$ such that $\ph_\lambda\big|_{N^-_\lambda}\geq 0$ for all $\lambda \in (0,\hat{\lambda}_0^*]$.
\end{proof}

Now we minimize $\ph_\lambda$ on the manifold $N^-_\lambda$.

\begin{proposition}
    If hypotheses H$_0$ hold and $\lambda \in (0,\lambda^*_0)$, then we can find $v^*\in N^-_\lambda$ with $v^*\geq 0$ such that
    \begin{align*}
	m^-_\lambda=\inf_{N^-_\lambda} \ph_\lambda=\ph_\lambda(v^*).
    \end{align*}
\end{proposition}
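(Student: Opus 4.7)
The plan is to mirror the strategy of Proposition \ref{proposition_4}, now minimizing on $N^-_\lambda$, with the nonnegativity bound $\ph_\lambda|_{N^-_\lambda}\ge 0$ just established playing the role that was played there by the fact that $\psi_{u^*}$ is decreasing on $(0,t_1]$.

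I would start from a minimizing sequence $\{v_n\}_{n\ge 1}\subseteq N^-_\lambda$ with $\ph_\lambda(v_n)\searrow m^-_\lambda$. Proposition \ref{proposition_1} makes $\{v_n\}$ bounded in $\Wpzero{p}$, so along a subsequence $v_n\weak v^*$ in $\Wpzero{p}$ and $v_n\to v^*$ in $\Lp{r}$. The uniform lower bound \eqref{26} for elements of $N^-_\lambda$ together with strong $\Lp{r}$-convergence forces $\|v^*\|_r\ge c>0$, hence $v^*\neq 0$. I would then replay the fibering-function analysis of Proposition \ref{proposition_4} applied to $v^*$: shrinking $\hat{\lambda}^*_0$ if necessary so that $\eta_{v^*}(t_0)>\lambda\|v^*\|_r^r$, there exist $0<\hat t_1<t_0<\hat t_2$ with $\eta_{v^*}(\hat t_i)=\lambda\|v^*\|_r^r$, and the sign computation analogous to \eqref{13}--\eqref{14}, carried out now at $\hat t_2$, gives $\hat t_2 v^*\in N^-_\lambda$.

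The heart of the argument is strong $\Wpzero{p}$-convergence. Suppose for contradiction that
\begin{align*}
  \liminf_{n\to\infty}\into\xi(x)|\nabla v_n|^p\,dx > \into\xi(x)|\nabla v^*|^p\,dx.
\end{align*}
Since $v_n\in N^-_\lambda$ and $\ph_\lambda(v_n)\ge 0$, the fibering function $\psi_{v_n}(t)=\ph_\lambda(tv_n)$ decreases on $[0,t_1^{(v_n)}]$, increases on $[t_1^{(v_n)},1]$, and decreases on $[1,\infty)$, while $\psi_{v_n}(0)=0\le\psi_{v_n}(1)=\ph_\lambda(v_n)$; hence $t=1$ is the \emph{global} maximum of $\psi_{v_n}$ on $[0,\infty)$. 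In particular,
\begin{align*}
  \ph_\lambda(\hat t_2 v_n)=\psi_{v_n}(\hat t_2)\le\psi_{v_n}(1)=\ph_\lambda(v_n),
\end{align*}
so $\liminf_{n\to\infty}\ph_\lambda(\hat t_2 v_n)\le m^-_\lambda$. On the other hand, the strict-liminf assumption combined with the convergence of $\|v_n\|_r^r$ and of $\into a(x)|v_n|^{1-\gamma}\,dx$ (by dominated/Vitali convergence using $a\in\Linf$ and the Sobolev embedding) yields
\begin{align*}
  \ph_\lambda(\hat t_2 v^*)<\liminf_{n\to\infty}\ph_\lambda(\hat t_2 v_n)\le m^-_\lambda,
\end{align*}
contradicting $\hat t_2 v^*\in N^-_\lambda$.

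Therefore $\into\xi(x)|\nabla v_n|^p\,dx\to\into\xi(x)|\nabla v^*|^p\,dx$, and by the same Radon--Riesz argument used in the proof of Proposition \ref{proposition_4} (weak convergence plus convergence of the equivalent weighted $p$-norm in the uniformly convex space $\Lpvalued{p}$) this gives $v_n\to v^*$ in $\Wpzero{p}$. Passing to the limit in the Nehari identity and in the defining strict inequality of $N^-_\lambda$ shows $v^*\in\overline{N^-_\lambda}\subseteq N^-_\lambda\cup N^0_\lambda$, and Proposition \ref{proposition_3} rules out $N^0_\lambda$, so $v^*\in N^-_\lambda$ with $\ph_\lambda(v^*)=m^-_\lambda$; replacing $v^*$ by $|v^*|$ yields $v^*\ge 0$. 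I expect the main obstacle to be this strong-convergence step, since the $N^-_\lambda$ case is not entirely symmetric to the $N^+_\lambda$ one: the contradiction relies on the \emph{global} (not merely local) maximality of $\psi_{v_n}$ at $t=1$, which is available precisely because of the nonnegativity $\ph_\lambda|_{N^-_\lambda}\ge 0$ proved immediately above.
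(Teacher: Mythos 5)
Your proof is correct and follows essentially the same route as the paper, whose own proof simply says to repeat the argument of Proposition \ref{proposition_4} using the second root $t_2>t_0$ of $\eta_{v^*}(t)=\lambda\|v^*\|_r^r$ (where $\eta'_{v^*}(t_2)<0$) to place $t_2v^*$ in $N^-_\lambda$. You also correctly patch the two steps that do not transfer verbatim: obtaining $v^*\neq 0$ from the uniform lower bound \eqref{26} rather than from $m^+_\lambda<0$, and replacing the monotonicity of $\psi_{u^*}$ on $(0,t_1]$ by the global maximality of $\psi_{v_n}$ at $t=1$, which is exactly where the nonnegativity $\ph_\lambda\big|_{N^-_\lambda}\geq 0$ enters.
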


\begin{proof}
    The proof of the proposition is the same as that of Proposition \ref{proposition_4}. Only now as we already hinted in that proof, we use the point $t_2>t_0$ for which we have
    \begin{align*}
	\eta_{v^*}(t_2)=\lambda \|v^*\|_r^r \quad\text{and}\quad \eta'_{v^*}(t_2)<0,
    \end{align*}
    see \eqref{10}. Then we conclude that
    \begin{align*}
	v^*\in N^-_\lambda, \quad v^* \geq 0, \quad m^-_\lambda =\ph_\lambda(v^*).
    \end{align*}
\end{proof}

Applying Lemma \ref{lemma_5} and reasoning as in the proofs of Propositions \ref{proposition_6} and \ref{proposition_7} we show that $N^-_\lambda$ is a natural constraint for the energy functional $\ph_\lambda$ as well.

\begin{proposition}
    If hypotheses H$_0$ hold and $\lambda \in (0,\hat{\lambda}_0^*)$, then $v^*$ is a weak solution of problem \eqref{problem}.
\end{proposition}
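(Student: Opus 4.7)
The plan is to parallel the natural-constraint argument from Propositions~\ref{proposition_6} and~\ref{proposition_7}, now applied to $v^* \in N^-_\lambda$. First I will invoke Lemma~\ref{lemma_5} (which, as its own proof notes, applies to $N^-_\lambda$ in exactly the same way as to $N^+_\lambda$): taking $v^*$ in place of $u$, there exist $\eps>0$ and a continuous map $\hat\vartheta\colon B_\eps(0) \to \R_+$ with $\hat\vartheta(0)=1$ and $\hat\vartheta(y)(v^*+y) \in N^-_\lambda$ for every $y \in B_\eps(0)$. Consequently, for any fixed $h \in \Wpzero{p}$ and all sufficiently small $t \geq 0$, one has $\hat\vartheta(th)(v^*+th) \in N^-_\lambda$, and therefore $\ph_\lambda(v^*) = m^-_\lambda \leq \ph_\lambda(\hat\vartheta(th)(v^*+th))$.

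The next step is to adapt Proposition~\ref{proposition_6}. I introduce the auxiliary function $\mu_h$ defined as in \eqref{21}, but with $v^*$ in place of $u^*$. Since $v^* \in N^-_\lambda$, the inequality \eqref{23} reverses, so $\mu_h(0) < 0$; by continuity, $\mu_h(t) < 0$ on an interval $[0,b_0)$. Combining this sign information with the inclusion $\hat\vartheta(th)(v^*+th) \in N^-_\lambda$ from Lemma~\ref{lemma_5} and examining the fiber $s \mapsto \ph_\lambda(s(v^*+th))$ near $s = \hat\vartheta(th) \approx 1$ (now a strict local maximum rather than minimum), one reruns the $\hat\vartheta$-chain at the end of the proof of Proposition~\ref{proposition_6} to obtain
\begin{align*}
\ph_\lambda(v^*) \leq \ph_\lambda(v^*+th) \quad\text{for all } t \in [0,b],
\end{align*}
for some $b \in (0,b_0]$.

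With this pointwise lower bound in hand, the argument of Proposition~\ref{proposition_7} goes through essentially verbatim: subtracting $\ph_\lambda(v^*)$, dividing by $t>0$, sending $t \to 0^+$, and applying Fatou's lemma to control the singular integrand yields
\begin{align*}
\into a(x)(v^*)^{-\gamma} h \, dx \leq \into \xi(x) |\nabla v^*|^{p-2} \nabla v^* \cdot \nabla h \, dx - \lambda \into (v^*)^{r-1} h \, dx
\end{align*}
for every $h \in \Wpzero{p}$. Replacing $h$ by $-h$ reverses the inequality; hence equality holds and $v^*$ is a weak solution of \eqref{problem}. The main obstacle is the middle step: in the $N^+_\lambda$ case the comparison $\ph_\lambda(\hat\vartheta(th)(v^*+th)) \leq \ph_\lambda(v^*+th)$ was immediate because $\hat\vartheta(th) \approx 1$ sits at a fiber minimum, whereas here it sits at a fiber maximum, so the sign in $\mu_h$ flips and one must carefully verify that for small $t$ the fiber comparison still chains to $m^-_\lambda \leq \ph_\lambda(v^*+th)$; once that sign check is in place, the remainder is a mechanical translation of the $N^+_\lambda$ argument.
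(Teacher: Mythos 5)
Your instinct that the middle step is the crux is right, but the fix you sketch does not exist: the intermediate inequality $\ph_\lambda(v^*)\leq \ph_\lambda(v^*+th)$ is false for a minimizer over $N^-_\lambda$. Take $h=v^*$: then $\ph_\lambda(v^*+tv^*)=\psi_{v^*}(1+t)$, and since $v^*\in N^-_\lambda$ the point $s=1$ is a strict local \emph{maximum} of the fibering map $\psi_{v^*}(s)=\ph_\lambda(sv^*)$, so $\ph_\lambda(v^*+tv^*)<\ph_\lambda(v^*)$ for all small $t>0$. The chain from Proposition \ref{proposition_6} breaks at exactly the link you flag: writing $\theta_t=\hat{\vartheta}(th)$, one still has $m^-_\lambda\leq\ph_\lambda(\theta_t(v^*+th))$ because $\theta_t(v^*+th)\in N^-_\lambda$, but $\theta_t$ is now the maximizer of $s\mapsto\ph_\lambda(s(v^*+th))$ over $[s_1,\infty)$, and $s=1$ lies in that interval for small $t$, so $\ph_\lambda(\theta_t(v^*+th))\geq\ph_\lambda(v^*+th)$ --- the wrong direction. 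The two inequalities therefore cannot be chained, and no sign check on $\mu_h$ can restore the comparison, since the target inequality itself is false.

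The correct adaptation (this is what Lemma 3 of Sun--Wu--Long actually delivers, and what the paper's one-line remark before this proposition is silently invoking) compares both terms at the \emph{same} scale $\theta_t$ instead of descaling to $1$. Since $s=1$ maximizes $\psi_{v^*}$ on $[s_1,\infty)$ and $\theta_t\to 1$, one has $\ph_\lambda(\theta_t v^*)\leq\ph_\lambda(v^*)$ for small $t$; combined with $\ph_\lambda(v^*)=m^-_\lambda\leq\ph_\lambda(\theta_t(v^*+th))$ this gives
\begin{align*}
    0\leq \ph_\lambda\l(\theta_t v^*+t\theta_t h\r)-\ph_\lambda\l(\theta_t v^*\r).
\end{align*}
Dividing by $t>0$, letting $t\to 0^+$ (so $\theta_t\to 1$) and using Fatou's lemma on the singular term yields precisely the one-sided inequality you wrote down, after which your final step (replace $h$ by $-h$ to upgrade to equality) goes through as in Proposition \ref{proposition_7}. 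Your observation that $L'_t(0,1)<0$ for $v^*\in N^-_\lambda$ is what keeps Lemma \ref{lemma_5} available and keeps $\theta_t(v^*+th)$ in $N^-_\lambda$; it is not, however, a substitute for the displayed comparison above.
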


Therefore, we have a second positive solution $v^*\in \Wpzero{p}\cap \Linf$ and by Harnack's inequality we have $v^*(x) >0$ for a.\,a.\,$x\in\Omega$.

Finally, we can state the following multiplicity theorem for problem \eqref{problem}.

\begin{theorem}
    If hypotheses H$_0$ hold, then there exists $\hat{\lambda}^*_0>0$ such that for all $\lambda \in (0,\hat{\lambda}^*_0)$, problem \eqref{problem} has at least two positive solutions
    \begin{align*}
	u^*,v^*\in\Wpzero{p}\cap \Linf, \quad u^*(x)>0, \, v^*(x)>0 \text{ for a.\,a.\,}x\in \Omega
    \end{align*}
    and
    \begin{align*}
	\ph_\lambda(u^*) <0 < \ph_\lambda(v^*).
    \end{align*}
\end{theorem}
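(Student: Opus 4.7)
The plan is essentially to assemble the pieces already developed in the preceding propositions, take $\hat{\lambda}^*_0$ to be the constant produced in the penultimate proposition (which already satisfies $\hat{\lambda}^*_0\le\hat{\lambda}^*\le\lambda^*$), and verify that the two candidate solutions coming from $N^+_\lambda$ and $N^-_\lambda$ are genuinely distinct, bounded, strictly positive, and have energies of opposite sign.

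First I would fix $\lambda\in(0,\hat{\lambda}^*_0)$ and invoke Proposition \ref{proposition_4} together with Proposition \ref{proposition_7} to produce $u^*\in N^+_\lambda$ with $u^*\ge 0$, $u^*\ne 0$, $\ph_\lambda(u^*)=m^+_\lambda<0$, and $u^*$ a weak solution of \eqref{problem}. By Lemma A.6 of Giacomoni--Schindler--Tak\'a\v c we have $u^*\in\Linf$, and the Harnack inequality from Pucci--Serrin gives $u^*(x)>0$ for a.a.\ $x\in\Omega$. Then I would apply the two propositions devoted to $N^-_\lambda$ to produce $v^*\in N^-_\lambda$ with $v^*\ge 0$, $\ph_\lambda(v^*)=m^-_\lambda$, which is also a weak solution; the same regularity and Harnack arguments give $v^*\in\Linf$ and $v^*(x)>0$ a.e.

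The key remaining step is distinguishing $u^*$ from $v^*$ and pinning down the sign of $\ph_\lambda(v^*)$. Since $\lambda<\hat{\lambda}^*_0\le\lambda^*$, Proposition \ref{proposition_3} gives $N^0_\lambda=\emptyset$, so $N^+_\lambda$ and $N^-_\lambda$ are disjoint; hence $u^*\ne v^*$. For the energy sign, the penultimate proposition already yields $\ph_\lambda(v^*)\ge 0$. To upgrade this to a strict inequality, I would combine the lower bound \eqref{26} for $\|v^*\|_r$ with the Nehari identity
\[
\ph_\lambda(v^*)=\left[\tfrac{1}{p}-\tfrac{1}{r}\right]\into\xi(x)|\nabla v^*|^p\,dx -\left[\tfrac{1}{1-\gamma}-\tfrac{1}{r}\right]\into a(x)|v^*|^{1-\gamma}\,dx,
\]
and argue that, after possibly shrinking $\hat{\lambda}^*_0$ once more, the first term (controlled from below by $\xi_0(p+\gamma-1)/(r+\gamma-1)\cdot\lambda(r+\gamma-1)/(p+\gamma-1)\cdot\|v^*\|_r^r/p$ via the definition of $N^-_\lambda$) dominates the sublinear term; equivalently, one can use that equality $\ph_\lambda(v^*)=0$ would force $v^*$ to saturate the inequalities that led to the lower bound on $\|v^*\|_r$ together with an upper bound derived from \eqref{27}--\eqref{29}, which is impossible for $\lambda$ small.

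The statement then follows by collecting these facts. I expect the main obstacle to be precisely the strict inequality $\ph_\lambda(v^*)>0$: the previous proposition only asserts $\ph_\lambda|_{N^-_\lambda}\ge 0$, and upgrading ``$\ge$'' to ``$>$'' requires a quantitative use of the two-sided bounds on $\|v^*\|_r$ (bounded below by a constant times $\lambda^{-1/(r-p)}$ and bounded above by a constant times $\lambda^{-1/(r+\gamma-1)}$) to exclude the borderline case, at the cost of possibly redefining $\hat{\lambda}^*_0$ to be smaller. All the other ingredients, disjointness of $N^+_\lambda$ and $N^-_\lambda$, $L^\infty$-regularity, and strict positivity, are immediate citations of results already in place.
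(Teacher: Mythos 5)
Your proposal is correct and follows the same route as the paper: the theorem is simply the assembly of Propositions \ref{proposition_4}, \ref{proposition_7}, their $N^-_\lambda$ counterparts, the $L^\infty$-bound from Giacomoni--Schindler--Tak\'a\v{c} and the Harnack inequality, with $u^*\neq v^*$ immediate from $\ph_\lambda(u^*)<0\leq\ph_\lambda(v^*)$ (or from the disjointness of $N^+_\lambda$ and $N^-_\lambda$, which holds by definition and does not even require $N^0_\lambda=\emptyset$). You are right to flag that the penultimate proposition only asserts $\ph_\lambda\big|_{N^-_\lambda}\geq 0$ while the theorem claims the strict inequality $\ph_\lambda(v^*)>0$; your proposed repair is exactly the correct one, and in fact the paper's own contradiction argument already delivers it without shrinking $\hat{\lambda}^*_0$ further, since starting from $\ph_\lambda(u)\leq 0$ instead of $\ph_\lambda(u)<0$ produces the same non-strict chain of inequalities leading to $c_9\leq\lambda^{(p+\gamma-1)/((r+\gamma-1)(r-p))}$ and hence the same contradiction for small $\lambda$.
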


\begin{remark}
    It is an interesting open problem whether the multiplicity theorem above holds if we assume that
    \begin{align*}
	\xi\in\Linf \quad\text{and}\quad \xi(x)>0 \text{ for a.\,a.\,}x\in\Omega,
    \end{align*}
    but not necessarily bounded away from zero.
\end{remark}

\section*{Acknowledgment}

The authors wish to thank a knowledgeable referee for his/her corrections and remarks.

The second author thanks the National Technical University of Athens for the kind hospitality during a research stay in June 2019.

\end{document}